\documentclass[onefignum,onetabnum]{siamart220329}


\usepackage{amsfonts}
\usepackage{graphicx}
\usepackage{epstopdf}
\usepackage{algorithmic}
\usepackage{mathabx}
\usepackage{braket,amsfonts,amsopn} 
 
\usepackage{enumitem}
\usepackage{enumerate}
 
\newtheorem{example}{Example}[section]
\newtheorem{assumption}{Assumption}[section]

\ifpdf
  \DeclareGraphicsExtensions{.eps,.pdf,.png,.jpg}
\else
  \DeclareGraphicsExtensions{.eps}
\fi


\newsiamremark{remark}{Remark}
\newsiamremark{hypothesis}{Hypothesis}
\crefname{hypothesis}{Hypothesis}{Hypotheses}
\newsiamthm{claim}{Claim}

\headers{Root-convergence GMRES(1)}{Y. He}

\title{The worst-case root-convergence factor of GMRES(1) }

\author{Yunhui He\thanks{Department of Mathematics, University of Houston, 3551 Cullen Blvd, Room 641, Houston, Texas 77204-3008, USA (\email{yhe43@central.uh.edu}).}}

\usepackage{amsopn}


\ifpdf
\hypersetup{
  pdftitle={The worst-case root-convergence factor of GMRES(1)},
  pdfauthor={Yunhui He}
}
\fi

\begin{document}

\maketitle

\begin{abstract}
	In this work, we analyze the asymptotic convergence factor of minimal residual iteration (MRI) (or GMRES(1)) for solving linear systems $Ax=b$  based on  vector-dependent nonlinear eigenvalue problems. The worst-case root-convergence factor is derived for linear systems with $A$ being symmetric or $I-A$ being skew-symmetric.  When $A$ is symmetric, the asymptotic convergence factor highly depends on the initial guess.   While $M=I-A$ is skew-symmetric,  GMRES(1) converges unconditionally and the worst-case root-convergence factor relies solely on the spectral radius of $M$. We also derive the q-linear convergence factor, which is the same as the worst-case root-convergence factor. Numerical experiments are presented to validate our theoretical results.

\end{abstract}
\begin{keywords}
Root-convergence factor, minimal residual iteration,  GMRES(1), restarted AA(1), vector-dependent eigenvalue problems
\end{keywords}

\begin{AMS}
 65F10, 15A18  
\end{AMS}

	\section{Introduction} \label{sec:intro}
	In this work, we consider solving the following linear system
	\begin{equation}\label{eq:linear-sys}
		Ax=b,
	\end{equation}
	where $A \in \mathbb{R}^{n\times n}$ is invertible. One of the many iterative methods for solving \eqref{eq:linear-sys} is the Generalized Minimal Residual Method (GMRES) \cite{saad1986gmres}.  In the literature, there is a substantial amount of work on the convergence analysis of GMRES, see \cite{greenbaum1994gmres, embree2022descriptive, toh1997gmres,van1993superlinear,robbe2002convergence,liesen2004convergence,liesen2000computable,titley2014gmres,liesen2004worst,arioli2009analysis}. The convergence behavior of GMRES is quite complex. For example, \cite{greenbaum1996any} shows that any nonincreasing convergence curve can be obtained with GMRES applied to a matrix having any desired eigenvalues.  Many variants of GMRES have been developed, such as GMRES with deflated restarting \cite{morgan2002gmres,giraud2010flexible,lin2012simpler, agullo2014block}, restarted GMRES (GMRES($m$)) \cite{morgan1995restarted,baker2005technique,frommer1998restarted,vecharynski2010cycle}, nested GMRES methods \cite{van1994gmresr}, hybrid GMRES \cite{nachtigal1992hybrid,simoncini1996hybrid},  and weighted GMRES \cite{guttel2014some,essai1998weighted}.
	
	There have been many efforts on the convergence analysis of restarted GMRES \cite{joubert1994convergence,faber2013properties,zitko2000generalization, zhong2008complementary},  which selects a relatively large restart parameter $m$ to imitate the full GMRES process. However, there still lacks better understanding of restarted GMRES, even for GMRES(1)(or Minimal Residual Iteration), see Algorithm \ref{alg:MRI}. Some numerical results show that using small values of the restart parameter yields convergence in fewer iterations compared to larger values. For example,  numerical experiments in \cite{embree2003tortoise} show that GMRES(1) for some linear systems with dimension three converges exactly in three iterations, while GMRES(2) stagnates. Moreover,  the convergence of GMRES(1) can be extremely sensitive to the initial guess.  There is still much to understand about the convergence behavior of GMRES($m$), especially regarding how the asymptotic convergence factor is sensitive to the initial guess. Improved GMRES convergence theory is needed to offer practical guidance on using this algorithm. We limit ourselves to investigating GMRES(1). In \cite[Proposition 2.1]{saad2000further}, the author derived the following upper bound for GMRES(1) when $A$ is symmetric definite:
	\begin{equation}
		\|r_{k+1}\| \leq  \left|\frac{\lambda_{\max}(A)-\lambda_{\min}(A)}{\lambda_{\max}(A)+\lambda_{\min}(A)}\right| \|r_k\|.
	\end{equation}
	However, there is not much investigation on the above bound. For different initial errors $r_0$, the upper bound might not be tight. What is the asymptotic convergence factor for GMRES(1)? Is it possible that there exists $r_0$ such that  the above bound is achieved for every $k$? In this work, we address these questions and provide additional results on how the initial guess or initial error affects the asymptotic convergence factor.
	
	\begin{algorithm}
		\caption{Minimal Residual Iteration (MRI) or  GMRES(1)} \label{alg:MRI}
		\begin{algorithmic}[1] 
			\STATE Given $x_0$ and set $r_0=Ax_0-b$
			\STATE  For $k=0, 1, \cdots$ until convergence Do:
			\begin{itemize}
				\item compute $\alpha_k=\frac{r_k^TA^Tr_k}{r_k^TA^TAr_k}$
				\item compute $x_{k+1}=x_k-\alpha_k r_k$
				\item compute  $r_{k+1}=r_k-\alpha_kAr_k$
			\end{itemize}
			EndDo
		\end{algorithmic}
	\end{algorithm}
	
	\begin{definition}[Root  convergence]
		Assume that a sequence $\{y_k\}$ generated by an iterative method converges to $y^*$. We define the root-convergence factor of the iterative method as 
		\begin{equation}\label{eq:root-convergence-initial}
			\varrho(y_0) =\lim\sup_{k\rightarrow \infty} \varrho_k(y_0), \quad \text{where}\quad \varrho_k(y_0)=\|y_k-y^*\|^{1/k},
		\end{equation}
		where $y_0$ is the initial guess for the iterative method. Further more, we call
		
		\begin{equation}\label{eq:worst-case-convergence}
			\varrho^* =\max_{y_0} \varrho(y_0),
		\end{equation}
		the worst-case root-convergence factor of the iterative method.
	\end{definition}
	
	\begin{definition}[q-linear convergence,\cite{toth2015convergence}] 
		We call a sequence $\{y_k\}$ converges q-linearly with q-factor $\sigma \in(0,1)$ to $y^*$ if
		\begin{equation}\label{eq:q-convergence}
			\|y_k-y^*\| \leq \sigma\|y_{k-1}-y^*\|, \quad k\geq 1.
		\end{equation}
	\end{definition}
	
	Motivated by \cite{both2019anderson} and \cite{de2024convergence}, we analyze the convergence behavior of GMRES(1) based on eigenvector-dependent eigenvalue problems. The main goal of this work is to offer theoretical results of \eqref{eq:root-convergence-initial}, \eqref{eq:worst-case-convergence}, and \eqref{eq:q-convergence} for GMRES(1). The main contributions of this work are summarized as follows.
	We provide a clear picture of the eigenpairs of vector-dependent eigenvalue problems associated with the GMRES(1) iterations. We derive the worst-case root-convergence factor of GMRES(1) with $A$ being symmetric and $M=I-A$ being skew-symmetric. 
	Our results also show that how the initial guess can affect the asymptotic convergence factor.  For some special initial guesses for GMRES(1), we present the corresponding root-convergence factors. 
	We add some theoretical results for rAA(1).  We provide counterexamples for a conjecture of restarted Anderson acceleration with window size one, denoted as rAA(1), presented in \cite{de2024convergence}.
	A comparison between GMRES(1) and rAA(1) is discussed. We see that for $M$ being skew-symmetric, GMRES(1) always converges with the worst-case convergence factor depending only on the spectral radius of $M$ and has a faster convergence factor compared to rAA(1). For $A$ being symmetric definite, GMRES(1) always converges, and the worse-case root-convergence factor is $\left|\frac{\lambda_{\max}(A)-\lambda_{\min}(A)}{\lambda_{\max}(A)+\lambda_{\min}(A)}\right|$ and is one for indefinite system. We also derive the q-linear convergence factor, which is the same as the worst-case root-convergence factor.
	These results can help us better understand GMRES(1) and rAA(1) methods. They have the potential to offer guidance on designing faster algorithms, such as preconditioning.
	
	The rest of this work is organized as follows. In section \ref{sec:symmetric}, we analyze GMRES(1) for $A$ being symmetric and derive the worst-case root-convergence factor. Some new results of rAA(1) have been presented and we provide counterexamples of a conjecture from \cite{de2024convergence}.  In section \ref{sec:skew}, we consider $M=I-A$ being skew-symmetric, where the worst-case convergence factor is proposed. GMRES(1) always converges for this case. Then, we briefly discuss the q-linear convergence of GMRES(1). Numerical experiments are presented in section \ref{sec:num} to validate our theoretical results. Finally, we draw conclusions in section \ref{sec:con}.
	
	\section{Symmetric case for GMRES(1)}\label{sec:symmetric}
	From Algorithm \ref{alg:MRI}, we have
	\begin{equation}\label{eq:residual-form}
		r_{k+1}=(I -\alpha_k(r_k) A)r_k.
	\end{equation}
	Let us consider the following mapping $\Phi$:
	\begin{equation*}
		\Phi(v)=(I -\alpha(v) A)v,
	\end{equation*}
	where
	\begin{equation*}
		\alpha(v)=\frac{\langle v, Av \rangle}{\langle Av,Av \rangle}=\frac{\langle Av, S(Av)\rangle}{\langle Av,Av\rangle},
	\end{equation*}
	where $S=\frac{1}{2}(A^{-1}+A^{-T})$.
	Then \eqref{eq:residual-form} can be written as  $r_{k+1}=\Phi(r_k)$.

	It is known that for an iterative function $\mathcal{F}$ if it is differentiable at $x^*$ (a fixed point of  $\mathcal{F}$), then the worst-case root-convergence factor is given by the spectral radius of $\mathcal{F}'(x^*)$, see  \cite{ortega2000iterative}.
	Here, we cannot employ this approach to analyze GMRES(1).

	\subsection{One step of GMRES(1)}
	Define the following vector-dependent matrix
	\begin{equation*}
		\mathcal{I}(v) =I -\alpha(v) A, \quad  v\neq 0 .
	\end{equation*}
	It can be seen that $\mathcal{I}(v)v=\Phi(v)$. Based on this, every one step of the residuals of GMRES(1) satisfy
	\begin{equation*}
		r_{k+1} =\mathcal{I}(r_{k}) r_{k}, \quad k=0, 1, \cdots
	\end{equation*} 
	To understand the worst-case root-convergence factor of GMRES(1), we first investigate 
	\begin{equation*}
		\max_{v\in\mathbb{R}^n}\frac{\|\mathcal{I}(v)v\|}{\|v||}.
	\end{equation*}
	We consider the following eigenvector-dependent nonlinear eigenvalue problem: 
	\begin{equation}\label{eq:GEVPI}
		\mathcal{I}_2(u)u=\lambda u, \quad \text{where} \quad \mathcal{I}_2(u)= (I -\alpha(u)A^T)(I -\alpha(u)A).
	\end{equation}
	For simplicity, when considering $A$ as symmetric, we assume that the following condition is satisfied.
	\begin{assumption}
		Suppose that  $A\in \mathbb{R}^{n \times n}$ is symmetric with $p$ distinguish eigenvalues, denoted as  $\{a_i\}_{i=1}^p$. For each $i$, there are $n_i$ linearly independent orthonormal eigenvectors $\{u_{i,j}\}_{j=1}^{n_i}$, where $\sum_{i=1}^{p} n_i=n$.
	\end{assumption}
	
	\begin{lemma}\label{lem:RQ-max}
		\begin{equation}\label{eq:vector-matrix-norm}
			\max_{v\neq 0} \frac{\|\mathcal{I}(v)v\|}{\|v\|}=\Gamma^*,
		\end{equation}
		where  $\Gamma^*=\max_{\lambda}\sqrt{|\lambda|}$ with $\lambda$ being the eigenvalues of \eqref{eq:GEVPI}.
	\end{lemma}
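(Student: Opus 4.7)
The plan is to recognize the quantity on the left of \eqref{eq:vector-matrix-norm} as the square root of a Rayleigh-type quotient of the vector-dependent positive semidefinite matrix $\mathcal{I}_2(v)$:
\begin{equation*}
f(v) := \frac{\|\mathcal{I}(v)v\|^2}{\|v\|^2} = \frac{v^{T} \mathcal{I}_2(v) v}{v^{T} v},
\end{equation*}
and to prove that $\max_{v\ne 0} f(v) = (\Gamma^*)^2$. Since $\alpha(\cdot)$ is homogeneous of degree $0$, the map $v\mapsto \mathcal{I}(v)v$ is homogeneous of degree $1$, so $f$ is scale-invariant. The supremum may therefore be restricted to the unit sphere, which is compact, and is attained at some $v^*$.

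At this maximizer I would apply a Lagrange-multiplier argument. Writing $g(v) := \mathcal{I}(v)v = v - \alpha(v) Av$, a direct computation gives the directional derivative of $\|g(v)\|^2$ in an arbitrary direction $h$ as
\begin{equation*}
2\langle g(v), h\rangle \;-\; 2\bigl(\nabla\alpha(v)\cdot h\bigr)\langle g(v), Av\rangle \;-\; 2\alpha(v)\langle A^{T} g(v), h\rangle.
\end{equation*}
The key simplification, which I expect to be the main obstacle to spot cleanly, is that the definition $\alpha(v)=\langle v,Av\rangle/\|Av\|^2$ is exactly the minimizer of $\alpha\mapsto\|v-\alpha Av\|^2$ and therefore enforces the orthogonality identity $\langle g(v), Av\rangle = 0$. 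This causes the middle term, containing the troublesome $\nabla\alpha(v)$, to vanish identically, and the gradient collapses to $2(I-\alpha(v)A^{T})g(v)$. Imposing stationarity with a Lagrange multiplier $\mu$ for the constraint $\|v\|^2=1$ then yields
\begin{equation*}
(I-\alpha(v^*)A^{T})(I-\alpha(v^*)A)\,v^* = \mu\, v^*,
\end{equation*}
i.e., $(v^*,\mu)$ solves the nonlinear eigenvalue problem \eqref{eq:GEVPI}. Pairing with $v^*$ and using $\|v^*\|=1$ gives $\mu = f(v^*)$, so $\max_v \sqrt{f(v)} = \sqrt{\mu} \leq \Gamma^*$.

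For the reverse inequality, any eigenpair $(u,\lambda)$ of \eqref{eq:GEVPI} satisfies $\lambda = u^{T}\mathcal{I}_2(u)u/\|u\|^2 \geq 0$, since $\mathcal{I}_2(u) = \mathcal{I}(u)^{T}\mathcal{I}(u)$ is positive semidefinite; hence $\|\mathcal{I}(u)u\|/\|u\| = \sqrt{\lambda}$, showing that the left-hand side of \eqref{eq:vector-matrix-norm} is at least every such $\sqrt{\lambda}$, and therefore at least $\Gamma^*$. Combining the two directions yields equality. Apart from the cancellation of the $\nabla\alpha(v)$ term just highlighted, every step is either compactness, a standard first-order condition, or routine linear algebra.
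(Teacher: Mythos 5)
Your argument is correct, but it takes a different route from the paper. The paper disposes of this lemma in one line: it observes that $\|\mathcal{I}(v)v\|=\min_{c\in\mathbb{R}}\|(I-cA)v\|$ and then cites Theorems 3.5 and 3.6 of Faber, Liesen, and Tich\'{y} (\emph{Properties of worst-case GMRES}), which already characterize the worst-case one-step residual reduction in terms of the associated nonlinear eigenvalue problem. You instead give a self-contained variational proof: scale invariance of $f(v)=\|\mathcal{I}(v)v\|^2/\|v\|^2$ reduces the problem to the compact unit sphere, a maximizer $v^*$ exists, and the first-order condition there collapses to $\mathcal{I}_2(v^*)v^*=\mu v^*$ precisely because the least-squares definition of $\alpha(v)$ forces $\langle \mathcal{I}(v)v, Av\rangle=0$, killing the $\nabla\alpha$ term (note $\alpha$ is smooth on $v\neq 0$ since $A$ is invertible, so the Lagrange argument is legitimate); pairing with $v^*$ gives $\mu=\max f\le(\Gamma^*)^2$, and the reverse inequality follows from $\lambda=\|\mathcal{I}(u)u\|^2/\|u\|^2\ge 0$ for every eigenpair of \eqref{eq:GEVPI}. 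In effect you reprove the stationarity characterization that the cited reference supplies, specialized to the $m=1$ case. What your version buys is transparency and independence from the external result, and it makes explicit why the eigenvalues of \eqref{eq:GEVPI} are real and nonnegative, so $\sqrt{|\lambda|}=\sqrt{\lambda}$; what the paper's version buys is brevity and a pointer to a framework that covers general restart parameters. Both establish the lemma.
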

	\begin{proof}
		Since $\|\mathcal{I}(v)\|=\min_{c \in \mathbb{R}}\|(I-cA)v\|$, \eqref{eq:vector-matrix-norm} is a direct conclusion from \cite[Theorems 3.5, 3.6]{faber2013properties}.
	\end{proof}
	In the following, we focus on the case where $A$ is symmetric.
	\begin{lemma}
		Let $z=\sum_{i=1}^{n}y_i$, where $y_i$ are orthogonal eigenvectors of the symmetric matrix $A\in \mathbb{R}^{n\times n}$ associated with eigenvalues $s_i$. Then,
		\begin{equation}\label{eq:RQ-prop-I}
			\alpha(z)=\frac{\langle z,Az\rangle}{\langle Az,Az\rangle}=\frac{\sum_{i=1}^{n} s_i \|y_i\|^2}{ \sum_{i=1}^{n} s_i^2 \|y_i\|^2}.
		\end{equation}
	\end{lemma}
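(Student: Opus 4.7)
The plan is to prove this by direct expansion of the inner products in the numerator and denominator using the eigenvector property $Ay_i = s_i y_i$ together with the orthogonality relation $\langle y_i, y_j\rangle = 0$ for $i\neq j$.

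First I would compute $Az$ by linearity: since $Az = \sum_{i=1}^n A y_i = \sum_{i=1}^n s_i y_i$. Next I would expand the numerator $\langle z, Az\rangle = \langle \sum_i y_i, \sum_j s_j y_j\rangle = \sum_{i,j} s_j \langle y_i, y_j\rangle$. Invoking orthogonality kills every cross term, leaving $\sum_i s_i \|y_i\|^2$.

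Then I would expand the denominator $\langle Az, Az\rangle = \langle \sum_i s_i y_i, \sum_j s_j y_j\rangle = \sum_{i,j} s_i s_j \langle y_i, y_j\rangle$, and again orthogonality reduces this to $\sum_i s_i^2 \|y_i\|^2$. Forming the quotient yields the stated formula.

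The entire argument is essentially a bookkeeping exercise; there is no real obstacle. The only thing worth noting is that symmetry of $A$ is used implicitly only to guarantee the existence of an orthogonal eigenbasis (which the hypothesis already assumes directly), and so the proof does not need to invoke symmetry again. If any $y_i = 0$ the corresponding term simply vanishes, so the formula remains valid without additional assumptions on the $y_i$ being nonzero.
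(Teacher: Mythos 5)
Your proof is correct and is precisely the direct expansion the paper leaves implicit (the lemma is stated without proof there): compute $Az=\sum_i s_i y_i$ and use orthogonality to reduce both inner products, which is the only natural argument. Nothing to add.
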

	For convenience, we consider the orthonormal eigenvectors of $A$ in the following.
	
	\begin{lemma}\label{lem:eigpair-two-condition}
		Let $u_i,u_j$ be two orthonormal eigenvectors of the symmetric matrix $A$ with eigenvalues $a_i$ and $a_j\neq a_i$, respectively.  Consider $z_{ij}=c_i u_i+c_j u_j$ with $\epsilon=\frac{c_j}{c_i}$. Then, $z_{ij}$ is an eigenvector of \eqref{eq:GEVPI} associated with non-unit eigenvalue if and only if  $a_ia_j>0$ and $\epsilon^2=\frac{a_i}{a_j}$, and the corresponding eigenvalue is
		\begin{equation*}
			\lambda_{ij} =\frac{(a_j-a_i)^2}{(a_j+a_i)^2}.
		\end{equation*}
	\end{lemma}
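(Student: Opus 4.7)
The plan is to exploit the symmetry of $A$ to reduce $\mathcal{I}_2(u)$ to $(I - \alpha(u) A)^2$, and then decompose everything along the two-dimensional invariant subspace $\mathrm{span}\{u_i, u_j\}$.

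First I would compute the scalar $\alpha(z_{ij})$ by applying \eqref{eq:RQ-prop-I} to $z = c_i u_i + c_j u_j$, which after dividing by $c_i^2$ gives
\begin{equation*}
\alpha(z_{ij}) = \frac{a_i + \epsilon^2 a_j}{a_i^2 + \epsilon^2 a_j^2}.
\end{equation*}
Next I would apply $(I - \alpha A)^2$ to $z_{ij}$ using $A u_i = a_i u_i$ and $A u_j = a_j u_j$, yielding
\begin{equation*}
\mathcal{I}_2(z_{ij}) z_{ij} = (1 - \alpha a_i)^2 c_i u_i + (1 - \alpha a_j)^2 c_j u_j.
\end{equation*}
Matching this with $\lambda z_{ij}$ and using linear independence of $u_i, u_j$ forces the two scalar equations $(1 - \alpha a_i)^2 = \lambda = (1 - \alpha a_j)^2$.

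The key algebraic step is then to analyze $(1 - \alpha a_i)^2 = (1 - \alpha a_j)^2$. Since $a_i \ne a_j$, the ``same sign'' branch gives $\alpha = 0$, which corresponds to the trivial eigenvalue $\lambda = 1$ that the lemma explicitly excludes. The ``opposite sign'' branch gives $1 - \alpha a_i = -(1 - \alpha a_j)$, i.e.\ $\alpha = 2/(a_i + a_j)$. Substituting this value into the formula for $\alpha(z_{ij})$ and clearing denominators leads to the factored identity $(a_i - a_j)(a_i - \epsilon^2 a_j) = 0$, so $\epsilon^2 = a_i/a_j$; the requirement that $\epsilon$ be real then forces $a_i a_j > 0$. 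Conversely, if $a_i a_j > 0$ and $\epsilon^2 = a_i/a_j$, then this choice of $\alpha$ satisfies $\alpha(z_{ij}) = 2/(a_i+a_j)$, so $z_{ij}$ is an eigenvector with eigenvalue $\lambda_{ij}$.

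Finally, plugging $\alpha = 2/(a_i+a_j)$ back into $(1-\alpha a_i)^2$ gives $\lambda_{ij} = \bigl((a_j - a_i)/(a_j + a_i)\bigr)^2$, as claimed. I expect the main obstacle to be only bookkeeping: showing that the ``same sign'' root really is forbidden (because $\lambda = 1$ is what the statement calls the unit eigenvalue) and verifying the equivalence in both directions, since the lemma is stated as an ``if and only if.'' No deep estimates are needed; the proof is a direct expansion in the invariant plane spanned by $u_i$ and $u_j$.
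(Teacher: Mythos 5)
Your proposal is correct and follows essentially the same route as the paper: compute $\alpha(z_{ij})=\frac{a_i+\epsilon^2 a_j}{a_i^2+\epsilon^2 a_j^2}$, expand $\mathcal{I}_2(z_{ij})z_{ij}$ in the eigenbasis, equate the two componentwise equations to get $(1-\alpha a_i)^2=(1-\alpha a_j)^2$, take the branch $\alpha=2/(a_i+a_j)$, and deduce $\epsilon^2=a_i/a_j$ and $\lambda_{ij}=\bigl((a_j-a_i)/(a_j+a_i)\bigr)^2$. Your explicit dismissal of the same-sign branch ($\alpha=0$, giving the excluded unit eigenvalue) and your check of the converse direction are small refinements the paper leaves implicit, but the argument is the same.
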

	
	\begin{proof}
		By standard calculation, we have
		\begin{equation}\label{eq:form-alphazij}
			\alpha(z_{ij}) =\frac{a_i+a_j\epsilon^2}{a_i^2+a_j^2\epsilon^2}.
		\end{equation}
		Assume that 
		\begin{equation*}
			\mathcal{I}_2(z_{ij})(z_{ij}) =(1-\alpha(z_{ij})a_i)^2 c_iu_i + (1-\alpha(z_{ij})a_j)^2 c_ju_j=\lambda_{ij} z_{ij}.
		\end{equation*}
		It follows that
		\begin{equation*}
			(1-\alpha(z_{ij}) a_i)^2 c_iu_i =\lambda_{ij}c_i u_i \quad \text{and} \quad 
			(1-\alpha(z_{ij}) a_j)^2 c_ju_j =\lambda_{ij}c_j u_j.
		\end{equation*}
		It is equivalent to $(1-\alpha(z_{ij}) a_i)^2 =	(1-\alpha(z_{ij}) a_j)^2$, that is, $2=(a_j+a_i)\alpha(z_{ij})$. 
		Using $\alpha(z_{ij})$ defined in \eqref{eq:form-alphazij}, we obtain $\epsilon^2=\frac{a_i}{a_j}$.
		Next, we compute the eigenvalue
		\begin{equation*}
			\lambda_{ij}=(1- \alpha(z_{ij})a_i)^2=\left(\frac{a_j(a_j-a_i)\epsilon^2}{a_i^2+a_j^2\epsilon^2}\right)^2=\frac{ (a_j-a_i)^2}{(a_j+a_i)^2},
		\end{equation*}
		which completes the proof.
	\end{proof}
	
	\begin{theorem}\label{thm:multiple-two-eigpair-I} 
		Assume that $A$ is symmetric. Let  $i_1, i_2 \in \{1,2,\cdots, p\}$ and $i_1\neq i_2$. Consider
		\begin{equation*} 
			u^*= \sum_{s=1}^{n_{i_1}} c_{i_1,s} u_{i_1,s} +  \sum_{r=1}^{n_{i_2}} c_{i_2,r} u_{i_2,r}, \quad  c_{i_1,1}c_{i_2,1}\neq 0,
		\end{equation*}
		and $u_{i_1 i_2}=   c_{i_1} u_{i_1,1} + c_{i_2} u_{i_2,1}$,	where	
		\begin{equation} \label{eq:coeff-square-require-I}
			c_{i_1}^2=\sum_{s=1}^{n_{i_1}}c_{i_1,s}^2, \quad c_{i_2}^2=\sum_{r=1}^{n_{i_1}}c_{i_1,r}^2,
		\end{equation}
		Then,
		\begin{equation}\label{eq:invariant-RQ1-I}
			\alpha(u^*)=\alpha(u_{i_1 i_2}). 
		\end{equation}
		Moreover,  if $\left(\frac{c_{i_2}}{c_{i_1}}\right)^2=\frac{a_{i_1}}{a_{j_1}}>0$, then $u^*$ and $u_{i_1 i_2}$ are eigenvectors of \eqref{eq:GEVPPi} associated with the same eigenvalue, i.e.,
		$\mathcal{I}_2(u^*)u^*=\lambda_{i_1 i_2} u^*$ and  $\mathcal{I}_2(u_{i_1 i_2})u_{i_1 i_2}=\lambda_{i_1 i_2} u_{i_1 i_2}$,
		where
		\begin{equation}\label{eq:eig-asso2-I}
			\lambda_{i_1 i_2}(\epsilon_{i_1 i_2}) =\frac{(a_{i_2}-a_{i_1})^2}{(a_{i_1}+a_{i_2})^2}.
		\end{equation}
	\end{theorem}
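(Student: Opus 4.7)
The plan is to reduce this statement to Lemma~2.3 (the two-vector case) by exploiting the fact that within each eigenspace of $A$, only the \emph{total squared norm} of the coefficients enters the Rayleigh-quotient-like scalar $\alpha(\cdot)$, and that $\mathcal{I}_2(v)$ acts by the same scalar on every vector from a given eigenspace of $A$.

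First I would verify the identity $\alpha(u^*)=\alpha(u_{i_1 i_2})$. Since $A$ is symmetric and the $u_{i,j}$ are orthonormal eigenvectors, I can apply the formula \eqref{eq:RQ-prop-I} directly to $u^*$, with only the eigenvalues $a_{i_1}$ and $a_{i_2}$ appearing. Collecting terms by eigenvalue gives
\begin{equation*}
\alpha(u^*)=\frac{a_{i_1}\sum_{s} c_{i_1,s}^2 + a_{i_2}\sum_{r} c_{i_2,r}^2}{a_{i_1}^2\sum_{s} c_{i_1,s}^2 + a_{i_2}^2\sum_{r} c_{i_2,r}^2}=\frac{a_{i_1}c_{i_1}^2+a_{i_2}c_{i_2}^2}{a_{i_1}^2 c_{i_1}^2+a_{i_2}^2 c_{i_2}^2},
\end{equation*}
after using the definitions \eqref{eq:coeff-square-require-I}. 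Applying \eqref{eq:RQ-prop-I} to $u_{i_1 i_2}=c_{i_1}u_{i_1,1}+c_{i_2}u_{i_2,1}$ yields precisely the same expression, establishing \eqref{eq:invariant-RQ1-I}.

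Next, for the eigenvalue claim I would use that $A$ symmetric means $\mathcal{I}_2(v)=(I-\alpha(v)A)^2$, which acts on each eigenvector $u_{i,j}$ of $A$ as multiplication by $(1-\alpha(v)a_i)^2$. Hence
\begin{equation*}
\mathcal{I}_2(u^*)u^*=(1-\alpha(u^*)a_{i_1})^2\sum_{s}c_{i_1,s}u_{i_1,s}+(1-\alpha(u^*)a_{i_2})^2\sum_{r}c_{i_2,r}u_{i_2,r}.
\end{equation*}
Under the hypothesis $(c_{i_2}/c_{i_1})^2=a_{i_1}/a_{i_2}>0$, substituting into the expression for $\alpha(u^*)$ shown above gives $\alpha(u^*)(a_{i_1}+a_{i_2})=2$, exactly as in the proof of Lemma~\ref{lem:eigpair-two-condition}. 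Therefore $(1-\alpha(u^*)a_{i_1})^2=(1-\alpha(u^*)a_{i_2})^2$, both equal to the common scalar $\lambda_{i_1 i_2}$, so $\mathcal{I}_2(u^*)u^*=\lambda_{i_1 i_2}u^*$. The same argument applied to $u_{i_1 i_2}$ is just Lemma~\ref{lem:eigpair-two-condition}, and the explicit formula \eqref{eq:eig-asso2-I} follows exactly as in that proof.

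I do not anticipate a genuine obstacle: the result is essentially a bookkeeping extension of Lemma~\ref{lem:eigpair-two-condition} from multiplicity one to arbitrary multiplicity. The only subtlety worth stating cleanly is that \eqref{eq:invariant-RQ1-I} depends on $u^*$ lying in the span of only two eigenspaces of $A$, so that the condition $2=(a_{i_1}+a_{i_2})\alpha(u^*)$, which is what forces the two squared factors above to agree, is exactly the same scalar condition derived for the two-dimensional case.
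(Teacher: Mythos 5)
Your proposal is correct and follows essentially the same route as the paper's proof: establish $\alpha(u^*)=\alpha(u_{i_1 i_2})$ via \eqref{eq:RQ-prop-I} and the coefficient grouping \eqref{eq:coeff-square-require-I}, then use the diagonal action of $\mathcal{I}_2(u^*)=(I-\alpha(u^*)A)^2$ on the two eigenspaces together with the condition $2=(a_{i_1}+a_{i_2})\alpha(u^*)$ (equivalently, Lemma~\ref{lem:eigpair-two-condition}) to get the common factor $\lambda_{i_1 i_2}$. Your re-derivation of that scalar condition is just an inlined version of the lemma the paper cites, so there is no substantive difference.
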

	\begin{proof}
		When $c_{i_1}=0$ or $c_{i_2}=0$, it is obvious that $u^*$ and $u_{i_1i_2}$ are eigenvectors of \eqref{eq:GEVPPi} and $\lambda=0$. Next, we prove \eqref{eq:invariant-RQ1-I}. From \eqref{eq:RQ-prop-I}, we have
		\begin{equation*}
			\alpha(u^*)= \frac{\sum_{s=1}^{n_{i_1}}a_{i_1}c_{i_1,s}^2 + \sum_{r=1}^{n_{i_2}}a_{i_2}c_{i_1,r}^2 }{ \sum_{s=1}^{n_{i_1}}a_{i_1}^2c_{i_1,s}^2 +\sum_{r=1}^{n_{i_2}}a_{i_2}^2c_{i_1,r}^2 }= \frac{a_{i_1} \sum_{s=1}^{n_{i_1}}c_{i_1,s}^2 + a_{i_2}\sum_{r=1}^{n_{i_2}}c_{i_1,r}^2 }{ a_{i_2}^2\sum_{s=1}^{n_{i_1}} c_{i_1,s}^2 +a_{i_2}^2\sum_{r=1}^{n_{i_2}}c_{i_1,r}^2 },
		\end{equation*}
		and 
		\begin{equation*}
			\alpha(u_{i_1 i_2}) =\frac{a_{i_1}c_{i_1}^2 + a_{i_2}c_{i_2}^2}{a_{i_1}^2c_{i_1}^2 +a_{i_2}^2c_{i_2}^2 }.
		\end{equation*}
		Using conditions in \eqref{eq:coeff-square-require-I} gives $\alpha(u^*)=\alpha(u_{i_1 i_2})$.
		
		From Lemma \ref{lem:eigpair-two-condition}, we know that $(\lambda_{i_1 i_2}(\epsilon_{i_1 i_2}), u_{i_1 i_2})$ is eigenpair of \eqref{eq:GEVPI}. For simplicity, let $\beta=\alpha(u^*)$. We have
		\begin{equation*}
			(1-\beta a_{i_1})^2 c_{i_1}u_{i_1,1}=\lambda_{i_1 i_2}c_{i_1}u_{i_1,1} \quad  \text{and}\quad 
			(1-\beta a_{i_2})^2 c_{i_2}u_{i_2,1}=\lambda_{i_1 i_2}c_{i_2}u_{i_2,1}.	
		\end{equation*}
		It means
			$(1-\beta a_{i_1})^2   =\lambda_{i_1 i_2}$ and  
			$(1-\beta a_{i_2})^2  =\lambda_{i_1 i_2}$.	
		Thus,
		\begin{align*}
			\mathcal{I}_2(u^*)u^*&=\sum_{s=1}^{n_{i_1}} (1-\beta a_{i_1})^2 c_{i_1,s}u_{i_1,s}+\sum_{r=1}^{n_{i_2}} (1-\beta a_{i_2})^2c_{i_2,r}u_{i_2,r}\\
			&=  \sum_{s=1}^{n_{i_1}} \lambda_{i_1 i_2} c_{i_1,s}u_{i_1,s}+\sum_{r=1}^{n_{i_2}} \lambda_{i_1 i_2} c_{i_2,r}u_{i_2,r}\\
			&= \lambda_{i_1 i_2}\left(\sum_{s=1}^{n_{i_1}}  c_{i_1,s}u_{i_1,s}+\sum_{r=1}^{n_{i_2}}  c_{i_2,r}u_{i_2,r}\right)\\
			&=\lambda_{i_1 i_2} u^*,
		\end{align*}
		which completes the proof.
	\end{proof}

	\begin{theorem}\label{thm:all-eigv-form-I}
		Suppose that  $A$ is symmetric. If $A$ is definite, then all the eigenvectors of \eqref{eq:GEVPPi} associated with nonzero eigenvalue are given by
		\begin{equation}\label{eq:all-eig-form-I}
			u^*= \sum_{s=1}^{n_{i_1}} c_{i_1,s} u_{i_1,s} +  \sum_{r=1}^{n_{i_2}} c_{i_2,r} u_{i_2,r}, \quad \text{where}\quad i_1\neq i_2, i_1, i_2 \in \{1,2,\cdots, p\},
		\end{equation}
		where at least one term in $\{c_{i_1,s}\}_{s=1}^{n_{i_1}}$ is nonzero, at least one term in $\{c_{i_2,r}\}_{r=0}^{n_{i_2}}$ is nonzero, and $\frac{c_{i_2}^2}{c_{i_1}^2}=\frac{a_{i_1}}{a_{i_2}}>0$ with $\sum_{s=1}^{n_{i_1}}c_{i_1,s}^2$ and $c_{i_2}^2=\sum_{r=1}^{n_{i_1}}c_{i_1,r}^2$.

		If $A$ is indefinite, then all the eigenvectors of \eqref{eq:GEVPPi}  associated with non-unit eigenvalues are given by \eqref{eq:all-eig-form-I}, and the eigenvectors associated with eigenvalue one are given by $u^*$, where $\langle Au^*,u^*\rangle=0$.
	\end{theorem}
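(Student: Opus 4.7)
The plan is to expand an arbitrary eigenvector in the orthonormal eigenbasis of $A$ and read off what constraints the nonlinear eigenvalue equation places on its components. Write $v=\sum_{i=1}^{p}\sum_{j=1}^{n_i} c_{i,j}u_{i,j}$. Because each $u_{i,j}$ is an eigenvector of $A$ with eigenvalue $a_i$, one computes directly
\begin{equation*}
\mathcal{I}_2(v)v=\sum_{i=1}^{p}\sum_{j=1}^{n_i}(1-\alpha(v)a_i)^2 c_{i,j}u_{i,j}.
\end{equation*}
For $\mathcal{I}_2(v)v=\lambda v$ I would therefore require $(1-\alpha(v)a_i)^2=\lambda$ for every index $i$ carrying a nonzero coefficient. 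This is a quadratic equation in $a_i$, so at most two distinct eigenvalues of $A$ can appear in the expansion of $v$.

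Next I would split on the number of distinct $a_i$'s. If only one index $i_1$ appears with some $c_{i_1,j}\neq 0$, then $v$ is itself an eigenvector of $A$, $\alpha(v)=1/a_{i_1}$, and $\lambda=(1-1)^2=0$; this case is excluded for definite $A$ (nonzero $\lambda$) and contributes to the description whenever $a_{i_1}=0$, which does not happen for invertible $A$. If two distinct indices $i_1\neq i_2$ appear, the equality $(1-\alpha(v)a_{i_1})^2=(1-\alpha(v)a_{i_2})^2$, combined with $a_{i_1}\neq a_{i_2}$, forces $\alpha(v)=2/(a_{i_1}+a_{i_2})$. By the invariance formula \eqref{eq:invariant-RQ1-I} of Theorem \ref{thm:multiple-two-eigpair-I}, one may rewrite $\alpha(v)$ solely in terms of the aggregated squared norms $c_{i_1}^2=\sum_s c_{i_1,s}^2$ and $c_{i_2}^2=\sum_r c_{i_2,r}^2$. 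Substituting and simplifying reduces the equation $\alpha(v)=2/(a_{i_1}+a_{i_2})$ to $(a_{i_2}-a_{i_1})(a_{i_1}c_{i_1}^2-a_{i_2}c_{i_2}^2)=0$, i.e.\ $c_{i_2}^2/c_{i_1}^2=a_{i_1}/a_{i_2}$, which is precisely the condition appearing in \eqref{eq:all-eig-form-I}; in the definite case the ratio $a_{i_1}/a_{i_2}$ is automatically positive, while in the indefinite case the ratio must be positive, restricting the pair $(a_{i_1},a_{i_2})$ to eigenvalues of the same sign. The converse (that any $v$ of this form is actually an eigenvector with eigenvalue $(a_{i_2}-a_{i_1})^2/(a_{i_1}+a_{i_2})^2$) is already covered by Theorem \ref{thm:multiple-two-eigpair-I}.

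Finally I would treat the unit eigenvalue separately. If $\lambda=1$, then $(1-\alpha(v)a_i)^2=1$ forces, for each $i$ with nonzero coefficient, either $\alpha(v)=0$ or $\alpha(v)a_i=2$. In the second alternative a single value $2/\alpha(v)$ is pinned down, so only one eigenvalue of $A$ can appear, but then $\alpha(v)=1/a_i$ contradicts $\alpha(v)a_i=2$. Hence the only possibility is $\alpha(v)=0$, equivalently $\langle Av,v\rangle=0$. In this case $\mathcal{I}_2(v)=I$ and $v$ is trivially a unit-eigenvalue eigenvector. For definite $A$ the condition $\langle Av,v\rangle=0$ forces $v=0$, so no unit eigenvectors arise; for indefinite $A$ this recovers exactly the additional family claimed in the statement.

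The routine algebra (collecting terms in the linear equation for $\alpha(v)$) is straightforward; the main obstacle is organizing the case analysis cleanly so that the reduction to at most two $a_i$'s, the use of Theorem \ref{thm:multiple-two-eigpair-I} to legally collapse the many-component representation to the two-component representative $u_{i_1i_2}$, and the sign requirement $a_{i_1}/a_{i_2}>0$ in the indefinite subcase are each justified without circularity. The $\lambda=1$ analysis is the only place where the definite/indefinite dichotomy actually produces a genuinely different conclusion, and I would present it last to emphasize why the two parts of the theorem differ.
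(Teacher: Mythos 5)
Your proposal is correct and follows essentially the same route as the paper's proof: expand in the orthonormal eigenbasis of $A$, reduce the nonlinear eigenvalue equation to the componentwise condition $(1-\alpha(v)a_i)^2=\lambda$, use the quadratic-in-$a_i$ argument to limit the expansion to at most two distinct eigenvalues, and treat $\alpha(v)=0$ (equivalently $\langle Av,v\rangle=0$, $\lambda=1$) separately. The only difference is cosmetic: you derive the ratio condition $c_{i_2}^2/c_{i_1}^2=a_{i_1}/a_{i_2}$ explicitly from $\alpha(v)=2/(a_{i_1}+a_{i_2})$, whereas the paper defers that coefficient condition to Theorem~\ref{thm:multiple-two-eigpair-I}.
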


	Formula \eqref{eq:all-eig-form-I} means that the eigenvectors associated with nonzero eigenvalue of \eqref{eq:GEVPPi} are any linear combination of eigenvectors of $A$ associated with exactly any two distinguish eigenvalues of $A$.
	
	\begin{proof}
		For any $i \in\{1,2,\cdots, p\}$, let $u_*=\sum_{s}^{n_i}c_{i,s}u_{i,s}\neq 0$. Then, $(I -\alpha(u_*) A) u_*=0$, which means $\mathcal{I}_2(u_*)u_*=0$ and $u_*$ is an eigenvector corresponding to zero eigenvalue. 
		
		Assume that the eigenvector of \eqref{eq:GEVPI} is given by
		\begin{equation}\label{eq:nonzero-ci-I}
			u^*=\sum_{i=1}^p  \sum_{s=1}^{n_{i}} c_{i,s} u_{i,s},
		\end{equation}
		which contains at least two different integers $i,j$ such that $c_{i,s} c_{j,t}\neq 0$. 
		
		When $A$ is indefinite, there exists a vector $v$ such that $\langle Av,v\rangle=0$. Then $\alpha(v)=0$ and $\mathcal{I}_2(v)(v)=v$. In this situation, $\lambda=1$ is an eigenvalue of \eqref{eq:GEVPI}.

		Next, we prove that in \eqref{eq:nonzero-ci-I}, there are only two different integers $i,j$ such that $c_{i,s} c_{j,t}\neq 0$.  For simplicity, let $\beta=\alpha(u^*)$.  Assume $\mathcal{I}_2(u^*)u^*= \lambda  u^*, \lambda\neq 0$. Then, we have
		\begin{equation}\label{eq:component-wise-term-I}
			c_{i,s}(1-\beta a_i)^2u_{i,s}=\lambda c_{i,s}u_{i,s}, \quad  (i, s) \in\{ 1,2,\cdots, p\} \times \{1,2,\cdots, n_i\}.  
		\end{equation}
		If $\beta=0$, then $\langle Au^*,u^* \rangle$ and $\lambda=1$. Consider $\beta\neq 0$.  Define the quadratic function $g(b)=(1-\beta b)^2$.  For a given value $g^*$, there are at most two different values $b_1,b_2$ such that $g(b_1)=g(b_2)=g^*$. To guarantee \eqref{eq:component-wise-term-I},  there are at most two different integers $i, j$ such that $c_{i,s} c_{j,t}\neq 0$, where $s\in \{1,2, \cdots, n_i\}$ and $t\in \{1,2, \cdots, n_j\}$, and the coefficients conditions are given in Theorem \ref{thm:multiple-two-eigpair-I}.
		Thus, all the eigenvectors of \eqref{eq:GEVPPi} associated with non-unit eigenvalues are any linear combination of eigenvectors of $A$ with exactly two distinguish eigenvalues of $A$.
	\end{proof}
	From the above discussion, we know that 
	\begin{equation}\label{eq:def-Gamma}
		\Gamma^* =
		\begin{cases}
			\max_{i\neq j, i,j\in \{1,2, \cdots,p\}} \frac{|a_j-a_i|}{|a_j|+|a_i|}, & \text{if $A$ is definite}\\
			1, & \text{if $A$ is indefinite}
		\end{cases}
	\end{equation}

	\begin{theorem}\label{thm:GMRES1-worst-case}
		For GMRES(1) applied to $Ax=b$, where $A$ is symmetric, the worst-case root-convergence factor  is 
		\begin{equation}\label{eq:gam1-form}
			\varrho^*_{GMRES(1)}\leq 
			\begin{cases} 
				\left|\frac{\lambda_{\max}(A)- \lambda_{\min}(A)}{\lambda_{\max}(A)+\lambda_{\min}(A)}\right|, & \text{if $A$ is definite}\\
				1, & \text{if $A$ is indefinite}
			\end{cases}
		\end{equation}
	\end{theorem}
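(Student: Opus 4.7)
The plan is to assemble the theorem by chaining together results already proved in the section. The key observation is that every GMRES(1) step obeys
\[
\|r_{k+1}\| \;=\; \|\mathcal{I}(r_k)\, r_k\| \;\le\; \max_{v\neq 0} \frac{\|\mathcal{I}(v)v\|}{\|v\|}\,\|r_k\| \;=\; \Gamma^*\,\|r_k\|,
\]
where the final equality is Lemma \ref{lem:RQ-max}. Applying this bound recursively yields $\|r_k\|\le (\Gamma^*)^k\|r_0\|$, hence $\|r_k\|^{1/k}\le \Gamma^*\,\|r_0\|^{1/k}$. Taking $\limsup_{k\to\infty}$ gives $\varrho(r_0)\le \Gamma^*$ for every initial residual, and since this bound is uniform in $r_0$, the worst-case root-convergence factor satisfies $\varrho^*_{\mathrm{GMRES(1)}}\le \Gamma^*$.

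The second step is to insert the explicit value of $\Gamma^*$ recorded in \eqref{eq:def-Gamma}, which comes from Theorem \ref{thm:all-eigv-form-I}: the nonzero, non-unit eigenvalues of \eqref{eq:GEVPI} are precisely the numbers $\lambda_{i_1 i_2}=(a_{i_2}-a_{i_1})^2/(a_{i_1}+a_{i_2})^2$ corresponding to pairs of distinct eigenvalues of $A$ with the same sign, together with the eigenvalue $1$ whenever $A$ is indefinite (witnessed by a vector $v$ with $\langle Av,v\rangle=0$, which causes $\alpha(v)=0$ and hence stagnation). Thus in the indefinite case $\Gamma^*=1$ and the bound reads $\varrho^*\le 1$ trivially.

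For the definite case, I would reduce $\Gamma^*=\max_{i\ne j}|a_j-a_i|/(|a_j|+|a_i|)$ to the extremal pair. Since all eigenvalues share a sign, this is $\max_{i\ne j}|a_j-a_i|/|a_j+a_i|$, and a short monotonicity check on $f(a,b)=(a-b)/(a+b)$ for $a>b>0$ (the partial derivatives have fixed signs: increasing in $a$, decreasing in $b$) shows the maximum is attained at $(a,b)=(\lambda_{\max}(A),\lambda_{\min}(A))$, producing the stated expression $|\lambda_{\max}(A)-\lambda_{\min}(A)|/|\lambda_{\max}(A)+\lambda_{\min}(A)|$.

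The main subtlety, and the reason the statement is phrased as an inequality rather than an equality, is that the vector realizing the maximum in Lemma \ref{lem:RQ-max} at a given step need not coincide with the actual residual produced by the previous step. So this proof delivers only an upper bound; attaining equality would require exhibiting an initial $r_0$ whose GMRES(1) orbit lands on eigenvectors of \eqref{eq:GEVPI} of maximal modulus at every iteration, a separate (and more delicate) construction that the author treats elsewhere in the paper. For the stated theorem, however, no such fixed-orbit analysis is needed; the proof is just ``Lemma \ref{lem:RQ-max} $+$ telescoping $+$ the explicit formula \eqref{eq:def-Gamma}.''
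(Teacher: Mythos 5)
Your proposal is correct and follows essentially the same route as the paper: the paper's proof is exactly Lemma \ref{lem:RQ-max} applied step by step to get $\|r_k\|\leq(\Gamma^*)^k\|r_0\|$, hence $\varrho^*\leq\Gamma^*$, with $\Gamma^*$ read off from \eqref{eq:def-Gamma}. Your added monotonicity check identifying the extremal pair $(\lambda_{\max},\lambda_{\min})$ and your remark on why the statement is only an upper bound (sharpness being handled later via the two-step analysis) are accurate elaborations of what the paper leaves implicit.
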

	\begin{proof}
		From Lemma \ref{lem:RQ-max}, we have  $\|r_k\|\leq \Gamma^*\|r_{k-1}\|$. Thus, $\|r_k\|\leq (\Gamma^*)^k\|r_0\|$ and $\varrho^*\leq\Gamma^*$.
	\end{proof}
	From the above result, we know that when $A$ is  symmetric definite, then the worst-case root-convergence factor is not worse than $\left|\frac{\lambda_{\max}(A)- \lambda_{\min}(A)}{\lambda_{\max}(A)+\lambda_{\min}(A)}\right|$. When $A$ is symmetric indefinite,  $\varrho^*=1$. In the following, we will show that there exists $r_0$ such that the upper bound can be achieved. 
	
	\subsection{Two steps of GMRES(1)}
	
	To better understand how the initial guess affects the root-convergence factor of GMRES(1), we consider two steps of GMRES(1), and define the following vector-dependent matrix
	\begin{equation*}
		\Pi(v) =(I -\alpha(\Phi(v)) A)(I -\alpha(v) A), \quad  v\neq 0 .
	\end{equation*}
	It can be seen that $\Pi(v)v=\Phi(\Phi(v))$. Based on this, every two steps of the residuals of GMRES(1) satisfy
	\begin{equation*}
		r_{2(k+1)} =\Pi(r_{2k}) r_{2k}, \quad k=0, 1, \cdots
	\end{equation*} 
	The above iteration is related to the following eigenvector-dependent nonlinear eigenvalue problem 
	\begin{equation}\label{eq:GEVPPi}
		\Pi(u)u=\lambda u.
	\end{equation}

	\begin{theorem}\label{thm:two-eigenvectors}
		Let $u_i,u_j$ be two orthonormal eigenvectors of the symmetric matrix $A$ with eigenvalues $a_i$ and $a_j\neq a_i$, respectively. Define
		\begin{equation*}
			z_{ij}=c_i u_i+c_ju_j=c_i(u_i+\epsilon u_j), \quad \epsilon=\frac{c_j}{c_i}, \quad c_i c_j\neq 0.
		\end{equation*}
		Then
		\begin{equation*}
			\Pi(z_{ij})z_{ij}=\lambda_{ij}(\epsilon) z_{ij},   
		\end{equation*}
		where
		\begin{equation*}
			\lambda_{ij}(\epsilon) =\frac{(a_j-a_i)^2}{a_i^2+\epsilon^2a_j^2}\frac{1}{1+\frac{1}{\epsilon^2}}.
		\end{equation*}
	\end{theorem}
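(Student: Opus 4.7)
The plan is to exploit the fact that $V:=\mathrm{span}(u_i,u_j)$ is a two-dimensional $A$-invariant subspace, so $\Phi(z_{ij})$ and $\Pi(z_{ij})z_{ij}=\Phi(\Phi(z_{ij}))$ both lie in $V$. To prove that $z_{ij}$ is an eigenvector of $\Pi$, it then suffices to show that after two applications of $\Phi$ the ratio of the $u_j$-coefficient to the $u_i$-coefficient is restored to its original value $\epsilon=c_j/c_i$; the corresponding eigenvalue can then be read off as the common scaling of the two coefficients.

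For the first step I would invoke \eqref{eq:form-alphazij} to write $\beta:=\alpha(z_{ij})=(a_i+a_j\epsilon^2)/(a_i^2+a_j^2\epsilon^2)$, and expand $\Phi(z_{ij})=c_i(1-\beta a_i)u_i+c_j(1-\beta a_j)u_j$. A short algebraic simplification yields the compact factorizations
\[
1-\beta a_i=\frac{a_j\epsilon^2(a_j-a_i)}{a_i^2+a_j^2\epsilon^2},\qquad 1-\beta a_j=\frac{a_i(a_i-a_j)}{a_i^2+a_j^2\epsilon^2},
\]
so the updated coefficient ratio is $\epsilon'=\epsilon\,(1-\beta a_j)/(1-\beta a_i)=-a_i/(a_j\epsilon)$.

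The crucial observation is that $\epsilon\mapsto-a_i/(a_j\epsilon)$ is an involution. Since $\Phi$ acts on $V$ entirely through this ratio map (the formulas above depend only on $\epsilon,a_i,a_j$), applying $\Phi$ a second time to $\Phi(z_{ij})$ sends $\epsilon'$ back to $\epsilon$. Therefore $\Pi(z_{ij})z_{ij}$ has the same coordinate ratio as $z_{ij}$ in the basis $\{u_i,u_j\}$, and is thus a scalar multiple of $z_{ij}$, which establishes the eigenvector claim.

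To identify $\lambda_{ij}(\epsilon)$, I would multiply the two one-step scaling factors: the $u_i$-component is scaled by $(1-\beta a_i)(1-\beta' a_i)$ and the $u_j$-component by $(1-\beta a_j)(1-\beta' a_j)$, and the involution forces these two products to agree. Substituting $\beta'=\alpha(\Phi(z_{ij}))$ computed from \eqref{eq:form-alphazij} at $(\epsilon')^2=a_i^2/(a_j^2\epsilon^2)$ and simplifying yields the common value $\epsilon^2(a_j-a_i)^2/\bigl[(a_i^2+a_j^2\epsilon^2)(1+\epsilon^2)\bigr]$, which is exactly $\lambda_{ij}(\epsilon)$. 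The main obstacle is bookkeeping: one must carefully track signs and the $\epsilon\leftrightarrow 1/\epsilon$ asymmetry so that the two scaling products visibly coincide. The involution observation is what keeps this manageable, turning an otherwise ad hoc verification into a transparent computation.
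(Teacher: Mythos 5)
Your proof is correct: the factorizations of $1-\beta a_i$ and $1-\beta a_j$, the ratio map $\epsilon\mapsto -a_i/(a_j\epsilon)$ and its involutive character, and the common two-step scaling $\epsilon^2(a_j-a_i)^2/\bigl[(a_i^2+a_j^2\epsilon^2)(1+\epsilon^2)\bigr]=\lambda_{ij}(\epsilon)$ all check out. The paper gives no proof here, deferring to the analogous computation in \cite{de2024convergence}, and your direct two-step verification on the invariant subspace ${\rm span}\{u_i,u_j\}$ is exactly that computation, with the involution observation as a tidy way to see that the coefficient ratio is restored. The only point worth stating explicitly is well-definedness of $\Pi(z_{ij})z_{ij}$: since $A$ is invertible ($a_i,a_j\neq 0$), $a_i\neq a_j$ and $\epsilon\neq 0$, your factorizations show $1-\beta a_i\neq 0$ and $1-\beta a_j\neq 0$, so $\Phi(z_{ij})\neq 0$ and $\alpha(\Phi(z_{ij}))$ is defined.
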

	\begin{proof}
		This can be easily derived following  \cite{de2024convergence}. Thus, we omit it here. 
		
	\end{proof}

	\begin{lemma}\label{lem:maxim-form}
		The maximum value of $\lambda_{ij}(\epsilon)$ is given by
		\begin{equation}\label{eq:largest-eig-G}
			\max_{\epsilon \in \mathbb{R}}\lambda_{ij}(\epsilon)=\lambda_{ij} \left(\pm\sqrt{\frac{|a_i|}{|a_j|}}\right)=\frac{(a_j-a_i)^2}{(|a_j|+|a_i|)^2}\leq 1.
		\end{equation}
	\end{lemma}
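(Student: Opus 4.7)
The plan is to reduce the two-variable-sign optimization to a single real variable by substituting $t=\epsilon^2\ge 0$, since $\lambda_{ij}(\epsilon)$ depends on $\epsilon$ only through $\epsilon^2$. Rewriting
\begin{equation*}
\lambda_{ij}(\epsilon)=\frac{(a_j-a_i)^2\,\epsilon^2}{(a_i^2+\epsilon^2 a_j^2)(1+\epsilon^2)},
\end{equation*}
the problem becomes maximizing $g(t)=t/h(t)$ with $h(t)=(a_i^2+a_j^2 t)(1+t)$ over $t\ge 0$, and the prefactor $(a_j-a_i)^2$ can be pulled outside. Note $a_i,a_j\ne 0$ (they are eigenvalues of an invertible $A$, used implicitly throughout the paper), so $h(t)>0$ and $g$ is smooth on $[0,\infty)$.

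Next I would compute the critical points of $g$. A direct calculation gives
\begin{equation*}
g'(t)=\frac{h(t)-t\,h'(t)}{h(t)^2}=\frac{a_i^2-a_j^2 t^2}{h(t)^2},
\end{equation*}
so the unique positive critical point is $t^\star=|a_i|/|a_j|$, i.e.\ $\epsilon=\pm\sqrt{|a_i|/|a_j|}$. Since $g(0)=0$ and $g(t)\to 0$ as $t\to\infty$, and $g>0$ on $(0,\infty)$, this critical point is the global maximum.

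Plugging $t^\star$ back in, $h(t^\star)=(a_i^2+|a_i||a_j|)(1+|a_i|/|a_j|)=|a_i|(|a_i|+|a_j|)^2/|a_j|$, hence $g(t^\star)=1/(|a_i|+|a_j|)^2$, which yields
\begin{equation*}
\max_{\epsilon\in\mathbb{R}}\lambda_{ij}(\epsilon)=\frac{(a_j-a_i)^2}{(|a_j|+|a_i|)^2}.
\end{equation*}
The bound $\le 1$ then follows immediately from the triangle inequality $|a_j-a_i|\le |a_j|+|a_i|$, with equality exactly when $a_i a_j\le 0$.

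No step here is a genuine obstacle; the only mild trap is making sure the substitution $t=\epsilon^2$ captures both signs of $\epsilon$ (which it does, explaining the $\pm$ in the statement) and verifying that the interior critical point is actually the global maximum on $[0,\infty)$ rather than just a local one, which I handle by the boundary-behavior argument above.
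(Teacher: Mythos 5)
Your proposal is correct and follows essentially the same route as the paper: substitute $\tau=\epsilon^2$, locate the unique positive critical point $\tau=|a_i|/|a_j|$ by differentiation (you maximize $\tau/h(\tau)$ while the paper equivalently minimizes the reciprocal $(a_i^2+\tau a_j^2)(1+1/\tau)$), and confirm it is the global extremum via the boundary behavior. Your write-up is in fact slightly more complete, since you check both limits $\tau\to 0$ and $\tau\to\infty$ and verify the value and the bound $\leq 1$ explicitly.
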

	\begin{proof}
		To find the maximum of $\lambda_{ij}(\epsilon)$, we only need to find the minimum of 
		$$g(\tau)=(a_i^2+\tau a_j^2) (1+\frac{1}{\tau}),$$
		where $\tau=\epsilon^2>0$. In fact, $g'(\tau)=a_j^2(1+\frac{1}{\tau})+(a_i^2+\tau a_j^2) (-\frac{1}{\tau^2})$. Let $g'(\tau)=0$. We obtain $\tau^2=\epsilon^4=\frac{a_i^2}{a_j^2}$. We notice that when $\epsilon \rightarrow 0$, $g(\tau)\rightarrow \infty$. Thus, when $\epsilon=\pm \sqrt{\frac{|a_i|}{|a_j|}}$, $g(\tau)$ obtains the minimum. 
	\end{proof}
	
	From Theorem \ref{thm:two-eigenvectors} and Lemma \ref{lem:maxim-form}, we see that the largest eigenvalue \eqref{eq:largest-eig-G} is the same as that of \eqref{eq:GEVPI}. In fact, we have the following result.
	\begin{corollary}
		If  $(\lambda, u)$ is  an eigenpair of \eqref{eq:GEVPI}, then $(\lambda, u)$ is an eigenpair of \eqref{eq:GEVPPi}.
	\end{corollary}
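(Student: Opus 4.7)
The plan is to show that, under the symmetric hypothesis in force for this section, the operator $\Pi(u)$ collapses to $\mathcal{I}_2(u)$ whenever $u$ is an eigenvector of \eqref{eq:GEVPI}. Since $A^T=A$ we have $\mathcal{I}_2(u)=(I-\alpha(u)A)^2$, while by definition $\Pi(u)=(I-\alpha(\Phi(u))A)(I-\alpha(u)A)$. Hence $\mathcal{I}_2(u)=\Pi(u)$ holds precisely when $\alpha(\Phi(u))=\alpha(u)$, and the whole corollary reduces to establishing this single scalar identity on eigenvectors of \eqref{eq:GEVPI}.

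Next I would invoke Theorem \ref{thm:all-eigv-form-I} (with its precursor Theorem \ref{thm:multiple-two-eigpair-I}) to write a generic eigenvector as
\begin{equation*}
u=\sum_{s=1}^{n_{i_1}}c_{i_1,s}u_{i_1,s}+\sum_{r=1}^{n_{i_2}}c_{i_2,r}u_{i_2,r},
\end{equation*}
with $i_1\neq i_2$ and the associated eigenvalue satisfying $(1-\alpha(u)a_{i_1})^2=(1-\alpha(u)a_{i_2})^2=\lambda$. Applying $\Phi$ componentwise gives
\begin{equation*}
\Phi(u)=(1-\alpha(u)a_{i_1})\sum_{s}c_{i_1,s}u_{i_1,s}+(1-\alpha(u)a_{i_2})\sum_{r}c_{i_2,r}u_{i_2,r},
\end{equation*}
and the key observation is that the two prefactors have equal square $\lambda$. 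Consequently both the numerator $\langle \Phi(u),A\Phi(u)\rangle$ and the denominator $\langle A\Phi(u),A\Phi(u)\rangle$ of $\alpha(\Phi(u))$ factor a common $\lambda$ out of the orthogonal decomposition, and cancellation leaves exactly the ratio defining $\alpha(u)$ via \eqref{eq:RQ-prop-I}. This yields $\alpha(\Phi(u))=\alpha(u)$, hence $\Pi(u)u=\mathcal{I}_2(u)u=\lambda u$.

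The two degenerate situations should be dispatched separately but quickly. If $\lambda=0$ then $\Phi(u)=0$, and regardless of any convention for $\alpha(0)$ we still have $\Pi(u)u=(I-\alpha(\Phi(u))A)\Phi(u)=0=\lambda u$ trivially. If $\lambda=1$ (possible only in the indefinite case, where $\langle Au,u\rangle=0$), then $\alpha(u)=0$, so $\Phi(u)=u$ and thus $\alpha(\Phi(u))=\alpha(u)=0$, which again forces $\Pi(u)=\mathcal{I}_2(u)=I$ on $u$.

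The mild obstacle I anticipate is the bookkeeping of signs: the two square roots $1-\alpha(u)a_{i_1}=\pm\sqrt{\lambda}$ and $1-\alpha(u)a_{i_2}=\pm\sqrt{\lambda}$ must be opposite in sign (otherwise $a_{i_1}=a_{i_2}$, contradicting $i_1\neq i_2$), but this opposite sign is squared out in the Rayleigh-type quotient defining $\alpha(\Phi(u))$, so it plays no role in the cancellation. Beyond that, the argument is essentially algebraic and follows immediately from the structural description of the eigenvectors already established in Theorems \ref{thm:multiple-two-eigpair-I} and \ref{thm:all-eigv-form-I}.
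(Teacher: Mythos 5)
Your proposal is correct and takes essentially the same route as the paper: it reduces the claim to the scalar identity $\alpha(\Phi(u))=\alpha(u)$ on eigenvectors of \eqref{eq:GEVPI} (the paper states precisely this for $z_{ij}$ and appeals to the properties in the proof of Theorem \ref{thm:multiple-two-eigpair-I} for general eigenpairs), which you verify by the same cancellation of the common factor $\lambda$ in the Rayleigh-type quotient, with the sign ambiguity harmlessly squared away. The only minor divergence is the $\lambda=0$ case, where $\Phi(u)=0$ and the paper regards $\Pi(u)u$ as undefined (see the proof of Theorem \ref{thm:all-eigv-form}) while you declare it trivially true; this is a convention mismatch, not a gap in the argument.
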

	
	We explain the above corollary in more details. If $(\lambda, z_{ij})$ is  an eigenpair of \eqref{eq:GEVPI}, where $z_{ij}=c_i u_i+c_j u_j$ with $\epsilon=\frac{c_j}{c_i}$, $a_ia_j>0$ and $\epsilon^2=\frac{a_i}{a_j}$, one can show that $\alpha(\Phi(z_{ij})) =\alpha( z_{ij})$. Then, it follows that $\Pi(z_{ij})z_{ij}=(I -\alpha(\Phi(z_{ij}))A)z_{ij})(I-\alpha(z_{ij})A)z_{ij}=\lambda z_{ij}$. For general eigenpairs of \eqref{eq:GEVPI}, we can use the properties in the proof of Theorem \ref{thm:multiple-two-eigpair-I} to process the proof here.

	\begin{theorem}\label{thm:multiple-two-eigpair} 
		Suppose that  $A\in \mathbb{R}^{n \times n}$ is symmetric. Let  $i_1, i_2 \in \{1,2,\cdots, p\}$ and $i_1\neq i_2$. Consider
		\begin{equation*} 
			u^*= \sum_{s=1}^{n_{i_1}} c_{i_1,s} u_{i_1,s} +  \sum_{r=1}^{n_{i_2}} c_{i_2,r} u_{i_2,r}, \quad  c_{i_1,1}c_{i_2,1}\neq 0,
		\end{equation*}
		and $u_{i_1 i_2}=   c_{i_1} u_{i_1,1} + c_{i_2} u_{i_2,1}$,
		where	
		\begin{equation} \label{eq:coeff-square-require}
			c_{i_1}^2=\sum_{s=1}^{n_{i_1}}c_{i_1,s}^2, \quad c_{i_2}^2=\sum_{r=1}^{n_{i_1}}c_{i_1,r}^2.
		\end{equation}
		Then, 
		\begin{equation*} 
			\Pi(u^*)u^*=\lambda_{i_1 i_2} u^*, \quad \Pi(u_{i_1 i_2})u_{i_1 i_2}=\lambda_{i_1 i_2} u_{i_1 i_2},
		\end{equation*}
		where 
		\begin{equation}\label{eq:eig-asso2}
			\lambda_{i_1 i_2}(\epsilon_{i_1 i_2}) =\frac{(a_{i_2}-a_{i_1})^2}{a_{i_1}^2+\epsilon^2_{i_1,i_2}a_{i_2}^2}\frac{1}{1+\frac{1}{\epsilon_{i_1 i_2}^2}}, \quad \epsilon_{i_1 i_2} =\frac{c_{i_2}}{c_{i_1}}.
		\end{equation}
	\end{theorem}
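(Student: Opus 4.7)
The plan is to reduce this statement to the already-established one-step invariance of Theorem \ref{thm:multiple-two-eigpair-I} together with the two-vector formula of Theorem \ref{thm:two-eigenvectors}. The key observation is that everything compatible with $\Pi$ can be computed from two scalars $\beta_1=\alpha(u^*)$ and $\beta_2=\alpha(\Phi(u^*))$, so it suffices to show that both $\beta_1$ and $\beta_2$ coincide with the corresponding quantities for $u_{i_1 i_2}$.

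First I would record, using \eqref{eq:invariant-RQ1-I} from Theorem \ref{thm:multiple-two-eigpair-I}, that $\beta_1:=\alpha(u^*)=\alpha(u_{i_1 i_2})$. Then I would write $\Phi(u^*)=(I-\beta_1 A)u^*$ and expand it in the eigenbasis of $A$:
\begin{equation*}
\Phi(u^*)=(1-\beta_1 a_{i_1})\sum_{s=1}^{n_{i_1}}c_{i_1,s}u_{i_1,s}+(1-\beta_1 a_{i_2})\sum_{r=1}^{n_{i_2}}c_{i_2,r}u_{i_2,r},
\end{equation*}
and similarly $\Phi(u_{i_1 i_2})=(1-\beta_1 a_{i_1})c_{i_1}u_{i_1,1}+(1-\beta_1 a_{i_2})c_{i_2}u_{i_2,1}$. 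The squared coefficient sums of $\Phi(u^*)$ restricted to the $a_{i_1}$- and $a_{i_2}$-eigenspaces are $(1-\beta_1 a_{i_1})^2 c_{i_1}^2$ and $(1-\beta_1 a_{i_2})^2 c_{i_2}^2$ by \eqref{eq:coeff-square-require}, which exactly match those of $\Phi(u_{i_1 i_2})$. Hence a second application of Theorem \ref{thm:multiple-two-eigpair-I} gives $\beta_2:=\alpha(\Phi(u^*))=\alpha(\Phi(u_{i_1 i_2}))$.

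With both scaling factors identified, the mapping $\Pi$ acts on the $a_{i_1}$- and $a_{i_2}$-eigenspaces as multiplication by $\mu_{i_1}:=(1-\beta_2 a_{i_1})(1-\beta_1 a_{i_1})$ and $\mu_{i_2}:=(1-\beta_2 a_{i_2})(1-\beta_1 a_{i_2})$, respectively, for both $u^*$ and $u_{i_1 i_2}$. Since Theorem \ref{thm:two-eigenvectors} guarantees $\Pi(u_{i_1 i_2})u_{i_1 i_2}=\lambda_{i_1 i_2}(\epsilon_{i_1 i_2})u_{i_1 i_2}$ with $\epsilon_{i_1 i_2}=c_{i_2}/c_{i_1}\ne 0$ and the only way this can hold with $c_{i_1}c_{i_2}\ne 0$ is $\mu_{i_1}=\mu_{i_2}=\lambda_{i_1 i_2}(\epsilon_{i_1 i_2})$, I can read off the same common multiplier. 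Propagating this through the full expansion of $u^*$ gives
\begin{equation*}
\Pi(u^*)u^*=\mu_{i_1}\sum_{s=1}^{n_{i_1}}c_{i_1,s}u_{i_1,s}+\mu_{i_2}\sum_{r=1}^{n_{i_2}}c_{i_2,r}u_{i_2,r}=\lambda_{i_1 i_2}(\epsilon_{i_1 i_2})u^*,
\end{equation*}
which is the desired identity.

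The main obstacle is the invariance $\alpha(\Phi(u^*))=\alpha(\Phi(u_{i_1 i_2}))$: it is not immediate from Theorem \ref{thm:multiple-two-eigpair-I} applied to $\Phi(u^*)$ because the coefficients now carry the prefactors $1-\beta_1 a_{i_1}$ and $1-\beta_1 a_{i_2}$. The step that makes the plan go through is noting that these prefactors are identical for every component inside a single eigenspace, so the hypothesis \eqref{eq:coeff-square-require} is preserved verbatim after factoring them out; everything else is bookkeeping.
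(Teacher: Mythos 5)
Your proposal is correct and follows essentially the same route as the paper: establish $\alpha(u^*)=\alpha(u_{i_1 i_2})$ via Theorem \ref{thm:multiple-two-eigpair-I}, observe that the per-eigenspace squared coefficient sums of $\Phi(u^*)$ and $\Phi(u_{i_1 i_2})$ coincide so that $\alpha(\Phi(u^*))=\alpha(\Phi(u_{i_1 i_2}))$, then use Theorem \ref{thm:two-eigenvectors} to identify the common multipliers $(1-\beta_2 a_{i_\ell})(1-\beta_1 a_{i_\ell})$ with $\lambda_{i_1 i_2}(\epsilon_{i_1 i_2})$ and propagate them through the expansion of $u^*$. No further comment is needed.
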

	\begin{proof}
		From Theorem \ref{thm:multiple-two-eigpair-I},  we have $\alpha(u^*)=\alpha(u_{i_1 i_2})$.
		Consider
		\begin{equation*}
			u^*_1 = (I -\alpha(u^*) A)u^*=\sum_{s=1}^{n_{i_1}} (1-\alpha(u^*)a_{i_1}) c_{i_1,s} u_{i_1,s} + \sum_{r=1}^{n_{i_2}} (1-\alpha(u^*)a_{i_2}) c_{i_2,r} u_{i_2,r},
		\end{equation*}
		and 
		\begin{equation*}
			u_{i_1 i_2,1}=(I -\alpha(u_{i_1 i_2}) A)u_{i_1 i_2}=(1-\alpha(u^*)a_{i_1}) c_{i_1} u_{i_1,1} +  (1-\alpha(u^*)a_{i_2}) c_{i_2}u_{i_2,1}.
		\end{equation*}
		We have
		\begin{equation*}
			\sum_{s=1}^{n_{i_1}} (1-\alpha(u^*)a_{i_1})^2 c_{i_1,s}^2 + \sum_{r=1}^{n_{i_2}} (1-\alpha(u^*)a_{i_2})^2 c_{i_2,r}^2= 
			(1-\alpha(u^*)a_{i_1})^2 c_{i_1}^2+  (1-\alpha(u^*)a_{i_2})^2 c_{i_2}^2.
		\end{equation*}
		It follows that $\alpha((I -\alpha(u^*) A)u^*)=\alpha((I -\alpha(u_{i_1 i_2}) A)u_{i_1 i_2})$.
		
		From Theorem \ref{thm:two-eigenvectors}, we know that $(\lambda_{i_1 i_2}(\epsilon_{i_1 i_2}), u_{i_1 i_2})$ is eigenpair of \eqref{eq:GEVPPi}. For simplicity, let $\beta_2=\alpha(u^*_1)$ and $\beta_1=\alpha(u^*)$. We have
		\begin{align*}
			(1-\beta_2 a_{i_1})(1-\beta_1a_{i_1}) c_{i_1}u_{i_1,1}&=\lambda_{i_1 i_2}c_{i_1}u_{i_1,1}, \\
			(1-\beta_2 a_{i_2})(1-\beta_1a_{i_2}) c_{i_2}u_{i_2,1}&=\lambda_{i_1 i_2}c_{i_2}u_{i_2,1}.	
		\end{align*}
		It means
		\begin{equation*}
			(1-\beta_2 a_{i_1})(1-\beta_1a_{i_1})  =\lambda_{i_1 i_2} \quad \text{and} \quad 
			(1-\beta_2 a_{i_2})(1-\beta_1a_{i_2})  =\lambda_{i_1 i_2}.	
		\end{equation*}
		Thus,
		\begin{align*}
			\Pi(u^*)u^*&=\sum_{s=1}^{n_{i_1}} (1-\beta_2 a_{i_1})(1-\beta_1a_{i_1}) c_{i_1,s}u_{i_1,s}+\sum_{r=1}^{n_{i_2}} (1-\beta_2 a_{i_2})(1-\beta_1a_{i_2}) c_{i_2,r}u_{i_2,r}\\
			&=  \sum_{s=1}^{n_{i_1}} \lambda_{i_1 i_2} c_{i_1,s}u_{i_1,s}+\sum_{r=1}^{n_{i_2}} \lambda_{i_1 i_2} c_{i_2,r}u_{i_2,r}\\
			&= \lambda_{i_1 i_2}\left(\sum_{s=1}^{n_{i_1}}  c_{i_1,s}u_{i_1,s}+\sum_{r=1}^{n_{i_2}}  c_{i_2,r}u_{i_2,r}\right)\\
			&=\lambda_{i_1 i_2} u^*,
		\end{align*}
		which completes the proof.
	\end{proof}

	\begin{theorem}\label{thm:all-eigv-form}
		Suppose that $A$ is symmetric. If $A$ is definite, then all the eigenvectors of \eqref{eq:GEVPPi} associated with nonzero eigenvalue are given by
		\begin{equation}\label{eq:all-eig-form}
			u^*= \sum_{s=1}^{n_{i_1}} c_{i_1,s} u_{i_1,s} +  \sum_{r=1}^{n_{i_2}} c_{i_2,r} u_{i_2,r}, \quad \text{where}\quad i_1\neq i_2, i_1, i_2 \in \{1,2,\cdots, p\},
		\end{equation}
		where at least one term in $\{c_{i_1,s}\}_{s=1}^{n_{i_1}}$ is nonzero and at least one term in $\{c_{i_2,r}\}_{r=0}^{n_{i_2}}$ is nonzero. Moreover, all the eigenvalues are less than one and are given by \eqref{eq:eig-asso2}.
		
		If $A$ is indefinite, then all the eigenvectors of \eqref{eq:GEVPPi} associated with non-unit eigenvalues are given by \eqref{eq:all-eig-form}, and the eigenvectors associated with eigenvalue one are given by $u^*$, where $\langle Au^*,u^* \rangle=0$.
	\end{theorem}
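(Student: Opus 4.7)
The plan is to mirror the structure of Theorem \ref{thm:all-eigv-form-I}, but with the quadratic scalar function $g(b)=(1-\beta b)^2$ replaced by the product $h(b)=(1-\beta_1 b)(1-\beta_2 b)$, where $\beta_1=\alpha(u^*)$ and $\beta_2=\alpha(\Phi(u^*))$. The central observation is that if $u^*=\sum_{i,s}c_{i,s}u_{i,s}$ satisfies $\Pi(u^*)u^*=\lambda u^*$, then expanding $\Pi(u^*)u^*$ in the eigenbasis of $A$ yields
\begin{equation*}
(1-\beta_2 a_i)(1-\beta_1 a_i)\,c_{i,s}=\lambda\, c_{i,s}
\end{equation*}
for every pair $(i,s)$. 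Hence $h(a_i)=\lambda$ for every distinct eigenvalue $a_i$ that actually appears in the support of $u^*$. Since $h$ is a polynomial of degree at most two in $b$, at most two distinct values of $a_i$ can solve $h(b)=\lambda$ unless $h$ is the constant polynomial.

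I would first dispose of the degenerate supports. If the support of $u^*$ lies in a single eigenspace of $A$, then $Au^*=a_{i_1}u^*$ gives $\alpha(u^*)=1/a_{i_1}$ and therefore $\Phi(u^*)=0$, whence $\Pi(u^*)u^*=0$, so $\lambda=0$, excluded. Next I would treat the unit-eigenvalue branch: if $\langle Au^*,u^*\rangle=0$, then $\beta_1=0$, giving $\Phi(u^*)=u^*$, hence $\beta_2=0$ and $\Pi(u^*)u^*=u^*$. Conversely, I would argue that whenever $\lambda=1$ occurs we must have $\beta_1=0$ (and therefore $\beta_2=0$): assuming $\beta_1\beta_2\neq 0$, the equation $h(b)=1$ reduces to $b[\beta_1\beta_2\, b-(\beta_1+\beta_2)]=0$, whose only nonzero root is $(\beta_1+\beta_2)/(\beta_1\beta_2)$, precluding two distinct nonzero eigenvalues $a_{i_1}\neq a_{i_2}$ (recall $A$ is invertible, so $a_i\neq 0$); and if exactly one of the $\beta_j$ vanishes then $h$ is linear and again has a unique root in $b$. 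In both sub-cases a support on two distinct $a_i$ is impossible, so $\lambda=1$ forces $\beta_1=0$, i.e.\ $\langle Au^*,u^*\rangle=0$. In the definite case this can only happen for $u^*=0$, which explains why the definite setting admits no unit eigenvalues.

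For the remaining non-zero, non-unit eigenvalues, $h$ is a genuine quadratic (not the constant polynomial $1$), so at most two distinct eigenvalues $a_{i_1},a_{i_2}$ of $A$ may appear with nonzero coefficients in $u^*$. The single-eigenvalue case is ruled out by the first paragraph of this plan, so exactly two must appear, giving the form \eqref{eq:all-eig-form}. Conversely, for any $u^*$ of that form, Theorem \ref{thm:multiple-two-eigpair} already furnishes an eigenpair with eigenvalue \eqref{eq:eig-asso2}, and Lemma \ref{lem:maxim-form} supplies the bound $\lambda_{i_1 i_2}\leq 1$, with strict inequality when $A$ is definite (since then $a_{i_1}a_{i_2}>0$ forces $(a_{i_2}-a_{i_1})^2<(|a_{i_1}|+|a_{i_2}|)^2$).

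The main subtlety I expect is the careful case-analysis around $\lambda=1$: one has to rule out the possibility of a unit eigenvalue whose eigenvector has nontrivial two-cluster support and $\alpha(u^*)\neq 0$, and this is exactly where invertibility of $A$ (so $a_i\neq 0$) enters. Apart from that, the algebra is a straightforward repetition of the one-step argument with the quadratic $h$ in place of the perfect square $(1-\beta b)^2$.
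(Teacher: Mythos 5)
Your argument is correct and follows essentially the same route as the paper's proof: both expand $\Pi(u^*)u^*$ in the eigenbasis of $A$, use that the quadratic $(1-\beta_2 b)(1-\beta_1 b)$ attains a given value at most twice so at most two distinct eigenvalues of $A$ can appear in the support, exclude single-eigenspace supports via $\Phi(u^*)=0$, identify the unit eigenvalue with $\alpha(u^*)=0$, i.e. $\langle Au^*,u^*\rangle=0$, and invoke Theorem \ref{thm:multiple-two-eigpair} and Lemma \ref{lem:maxim-form} for the converse and the bound on the eigenvalues. Your case analysis around $\lambda=1$ (in particular $\beta_2=0$ with $\beta_1\neq 0$, using invertibility of $A$) is in fact a bit more complete than the paper's, but the approach is the same.
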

	
	\begin{proof}
		For any $i \in\{1,2,\cdots, p\}$, let $u_*=\sum_{s}^{n_i}c_{i,s}u_{i,s}\neq 0$. Then, $(I -\alpha(u_*) A) u_*=0$.  So, such $u_*$ cannot be eigenvectors of \eqref{eq:GEVPPi} ($\alpha(0)$ is not defined). 
		
		Assume that the eigenvector of \eqref{eq:GEVPPi} is given by $u^*=\sum_{i=1}^p  \sum_{s=1}^{n_{i}} c_{i,s} u_{i,s}$,
		which contains at least two different integers $i,j$ such that $c_{i,s} c_{j,t}\neq 0$. 
		
		When $A$ is indefinite, there exists a vector $v$ such that $\langle Av,v\rangle=0$. Then $\alpha(v)=0$ and $\Pi(v)=v$. In this situation, $\lambda=1$ is an eigenvalue of \eqref{eq:GEVPPi}.

		Next, we prove that in $u^*$, there are at most two different integers $i,j$ such that $c_{i,s} c_{j,t}\neq 0$.  For simplicity, let $\beta_1=\alpha(u^*)$ and $\beta_2=\alpha(\Phi(u^*))=\alpha((I -\alpha(u^*) A)u^*)$. 
		
		Using $\Pi (u^*)u^*= \lambda  u^*, \lambda\neq 0$, we have
		\begin{equation}\label{eq:component-wise-term}
			c_{i,s}(1-\beta_2 a_i) (1-\beta_1 a_i)u_{i,s}=\lambda c_{i,s}u_{i,s}, \quad  (i, s) \in\{ 1,2,\cdots, p\} \times \{1,2,\cdots, n_i\}.  
		\end{equation}
		If $\beta_1=0$, then $\beta_2=0$. If $\beta_1\beta_2\neq 0$, let us consider the quadratic function $g(b)=(1-\beta_2b)(1-\beta_1 b)$.  For a given value $g^*$, there are at most two different values $b_1,b_2$ such that $g(b_1)=g(b_2)=g^*$. To guarantee \eqref{eq:component-wise-term},  there are at most two different integers $i, j$ such that $c_{i,s} c_{j,t}\neq 0$, where $s\in \{1,2, \cdots, n_i\}$ and $t\in \{1,2, \cdots, n_j\}$.  If $\beta_1=0$ and $\beta_2\neq 0$, to guarantee \eqref{eq:component-wise-term},  there is only at most one integer $i \in \{1, 2, \cdots, p\}\}$ such that $c_{i,s}\neq 0$, which means that $u^*=u_p$. However, we know for such $u_p$, $(I -\alpha(u_p) A) u_p=0$ and $\Pi(u_p)$ is not defined. 
		Thus, all the eigenvectors of \eqref{eq:GEVPPi} associated with non-unit eigenvalues are any linear combination of eigenvectors of $A$ associated with exactly two distinguish eigenvalues of $A$.
	\end{proof}
	
	\begin{corollary}
		If the initial residual $r_0=u^*$ is given by \eqref{eq:all-eig-form}, then the corresponding root-convergence factor is
		\begin{equation*}
			\varrho_k(r_0)=\left(\frac{(a_{i_2}-a_{i_1})^2}{a_{i_1}^2+\epsilon^2a_{i_2}^2}\frac{1}{1+\frac{1}{\epsilon_{i_1 i_2}^2}}\right)^{1/2},
		\end{equation*}
		where $\epsilon_{i_1 i_2}^2 =\left(\frac{c_{i_2}}{c_{i_1}}\right)^2$ with $c_{i_1}$ and $c_{i_2}$ given by \eqref{eq:coeff-square-require}.
	\end{corollary}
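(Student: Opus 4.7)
The plan is to iterate Theorem~\ref{thm:multiple-two-eigpair} while exploiting the scale-invariance of $\alpha$. Setting $r_0 = u^*$ for a $u^*$ of the form in~\eqref{eq:all-eig-form}, Theorem~\ref{thm:multiple-two-eigpair} immediately delivers the single-step identity $r_2 = \Pi(r_0) r_0 = \lambda_{i_1 i_2} r_0$, where $\lambda_{i_1 i_2} = \lambda_{i_1 i_2}(\epsilon_{i_1 i_2})$ is the eigenvalue in~\eqref{eq:eig-asso2}. This says that after two GMRES(1) steps the residual is a nonzero scalar multiple of $r_0$.

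The next step is to propagate this to all even iterates. First I would record the key observation that $\alpha(cv) = \alpha(v)$ for every nonzero scalar $c$, which is immediate from the definition since the numerator $\langle cv, A(cv)\rangle$ and denominator $\langle A(cv), A(cv)\rangle$ both scale by $c^2$. Consequently $\Phi(cv) = c\Phi(v)$ and $\Pi(cv)(cv) = c\,\Pi(v) v$. Because $r_2$ is a nonzero scalar multiple of $r_0$, this degree-one homogeneity combined with the base case above yields, by a routine induction, $r_{2k} = \lambda_{i_1 i_2}^{\,k} r_0$ for every $k \geq 0$. For the odd iterates I would then use scale-invariance once more: $\alpha(r_{2k}) = \alpha(\lambda_{i_1 i_2}^{\,k} r_0) = \alpha(r_0)$, giving $r_{2k+1} = (I - \alpha(r_0) A)\lambda_{i_1 i_2}^{\,k} r_0 = \lambda_{i_1 i_2}^{\,k} r_1$.

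With these closed-form expressions in hand, taking norms and $k$-th roots produces
\begin{equation*}
\varrho_{2k}(r_0) = \lambda_{i_1 i_2}^{1/2}\, \|r_0\|^{1/(2k)}, \qquad \varrho_{2k+1}(r_0) = \lambda_{i_1 i_2}^{\,k/(2k+1)}\, \|r_1\|^{1/(2k+1)},
\end{equation*}
both of which tend to $\sqrt{\lambda_{i_1 i_2}}$ as $k \to \infty$. Hence $\limsup_{k} \varrho_k(r_0) = \sqrt{\lambda_{i_1 i_2}(\epsilon_{i_1 i_2})}$, which is exactly the asserted formula.

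There is no genuine obstacle: the whole plan rests on two pieces already in place, namely Theorem~\ref{thm:multiple-two-eigpair} for the one-shot eigen-identity and the degree-one homogeneity of $\Pi$ that allows a single eigen-step to be iterated without losing the eigenvector structure. The only small point worth verifying is that $\alpha(r_{2k})$ remains well defined at each stage, which follows from $r_{2k} \neq 0$; this in turn is guaranteed by $\lambda_{i_1 i_2} > 0$ whenever $a_{i_1} \neq a_{i_2}$ and the two coefficient sums in~\eqref{eq:coeff-square-require} are nonzero, which is precisely the assumption built into the form~\eqref{eq:all-eig-form}.
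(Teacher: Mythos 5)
Your argument is correct and is essentially the route the paper itself takes: the corollary is an immediate consequence of Theorem~\ref{thm:multiple-two-eigpair}, and the paper's own proof of the worst-case factor uses exactly your iteration $r_{2k}=\lambda_{i_1 i_2}^{\,k}r_0$ obtained from the scale invariance $\alpha(cv)=\alpha(v)$. Your treatment of the odd iterates is a small extra care the paper glosses over (it only takes the $\limsup$ along even indices), but it changes nothing substantive.
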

	
	\begin{theorem}
		Suppose that  $A$ is symmetric. If $A$ is indefinite,
		\begin{equation}
			\max_{v\neq 0} \frac{\|\Pi(v)v\|}{\|v\|}=1.
		\end{equation}
		If $A$ is definite,	
		\begin{equation*}
			\max_{v\neq 0} \frac{\|\Pi(v)v\|}{\|v\|}=(\varrho^*_{GMRES(1)})^2,
		\end{equation*}
		where $\varrho^*_{GMRES(1)}$  is given by \eqref{eq:gam1-form}.
	\end{theorem}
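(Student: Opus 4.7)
The strategy is to reduce to Lemma~\ref{lem:RQ-max} by viewing the two-step operator $\Pi$ as the composition of two one-step operators, and then to exhibit explicit maximizers using the eigenpairs of $\Pi$ already catalogued in Theorem~\ref{thm:two-eigenvectors}, Lemma~\ref{lem:maxim-form}, and Theorem~\ref{thm:all-eigv-form}.

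First I would establish the universal upper bound. Since $\Pi(v)v = \Phi(\Phi(v))$ and Lemma~\ref{lem:RQ-max} gives $\|\Phi(w)\|\le \Gamma^*\|w\|$ for every nonzero $w$ (with $\Gamma^*$ as in~\eqref{eq:def-Gamma}), successive application of this bound to $w=v$ and then to $w=\Phi(v)$ yields
\begin{equation*}
\|\Pi(v)v\| \;=\; \|\Phi(\Phi(v))\| \;\le\; \Gamma^*\|\Phi(v)\| \;\le\; (\Gamma^*)^2\|v\|,
\end{equation*}
hence $\max_{v\neq 0}\|\Pi(v)v\|/\|v\|\le (\Gamma^*)^2$. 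By~\eqref{eq:def-Gamma} this upper bound equals $1$ in the indefinite case and $(\varrho^*_{GMRES(1)})^2$ from~\eqref{eq:gam1-form} in the definite case, so it remains only to show these bounds are attained.

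For attainability I would construct explicit maximizers. In the definite case, pick indices $i_1,i_2$ realizing the maximum in~\eqref{eq:def-Gamma} and take $v = c_{i_1}u_{i_1,1} + c_{i_2}u_{i_2,1}$ with ratio $\epsilon=\pm\sqrt{|a_{i_1}|/|a_{i_2}|}$; Theorem~\ref{thm:two-eigenvectors} combined with Lemma~\ref{lem:maxim-form} shows that this $v$ is an eigenvector of $\Pi$ with eigenvalue exactly $(\Gamma^*)^2$, so $\|\Pi(v)v\|/\|v\| = (\Gamma^*)^2$. In the indefinite case, $A$ has eigenvalues of opposite signs, so one can choose $v = c_{i_1}u_{i_1,1} + c_{i_2}u_{i_2,1}$ with $a_{i_1}a_{i_2}<0$ and coefficients satisfying $a_{i_1}c_{i_1}^2 + a_{i_2}c_{i_2}^2 = 0$; then $\alpha(v)=0$, whence $\Phi(v)=v$ and $\Pi(v)v=v$, giving ratio $1$. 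The argument is essentially a direct assembly of earlier results; the only mildly delicate point is confirming that no eigenvector of $\Pi$ can yield a larger eigenvalue than $(\Gamma^*)^2$, but this is exactly what Lemma~\ref{lem:maxim-form} proves and Theorem~\ref{thm:all-eigv-form} certifies by enumerating all eigenvectors of $\Pi$ associated with nonzero eigenvalues.
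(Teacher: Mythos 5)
Your proposal is correct and follows essentially the same route as the paper: the upper bound is obtained by factoring the two-step ratio through $\Phi(v)$ and applying the one-step bound of Lemma~\ref{lem:RQ-max} twice, and the lower bound by exhibiting eigenvectors of \eqref{eq:GEVPPi} (via Theorem~\ref{thm:two-eigenvectors} and Lemma~\ref{lem:maxim-form}, plus the $\alpha(v)=0$ vectors in the indefinite case) whose eigenvalue attains $(\varrho^*_{GMRES(1)})^2$. Your explicit maximizers simply spell out what the paper invokes more tersely, so no substantive difference.
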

	\begin{proof}
		Let  $(I-\alpha(v)A)v=v-\frac{\langle Av,v \rangle}{\langle Av,Av\rangle}Av=\Phi(v)$. If $\Phi(v)=0$, $\Pi(v)v$ is not defined.  Using \eqref{eq:vector-matrix-norm}, we have
		\begin{align*}
			\max_{v\neq 0}\frac{\|\Pi(v)v\|}{\|v\|} 
			& = \max_{v\neq 0}\frac{\|(I-\alpha(\Phi(v))A)\Phi(v)\|}{\|v\|}\\
			& = \max_{v\neq 0}\frac{\|(I-\alpha(\Phi(v))A)\Phi(v)\|}{ \|\Phi(v)\|} \frac{ \|\Phi(v)\|}{\|v\|}\\
			& \leq  \max_{y\neq 0} \frac{\|\mathcal{I}(y)y\|}{\|y\|} \max_{v\neq 0}\frac{\|\mathcal{I}(v)v\|}{\|v\|} \\
			&= \varrho^*_{GMRES(1)} \varrho^*_{GMRES(1)}=(\varrho^*_{GMRES(1)})^2.
		\end{align*}
		Using \eqref{eq:GEVPPi} and Lemma \ref{lem:maxim-form}, we have 
		\begin{equation*}
			\max_{v\neq 0}\frac{\|\Pi(v)v\|}{\|v\|} \geq \max_{\lambda} {\|\lambda\|}=(\varrho^*_{GMRES(1)})^2.
		\end{equation*} 
		Thus, we obtain the desired result.
	\end{proof}

	\begin{theorem}
		Consider GMRES(1) applied to $Ax=b$, where $A\in\mathbb{R}^{2\times 2}$ is symmetric. Let $r_0=c_1u_1+c_2u_2$ be the initial residual. Then, if $c_1c_2=0$,
		\begin{equation*}
			\varrho(r_0)=0.
		\end{equation*}
		If $c_1c_2\neq 0$, let $\epsilon=\frac{c_2}{c_1}$. Then
		\begin{equation}\label{eq:Gammaro-general}
			\varrho(r_0)=\sqrt{\lambda_{12}(\epsilon)} =\left(\frac{(a_2-a_1)^2}{(a_1^2+\epsilon^2a_2^2) (1+\frac{1}{\epsilon^2})}\right)^{1/2}.
		\end{equation}
	\end{theorem}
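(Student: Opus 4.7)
The plan is to exploit the fact that in two dimensions the orthonormal eigenvectors $u_1,u_2$ of the symmetric matrix $A$ form a basis, so any initial residual has the exact form $z_{12}$ appearing in Theorem \ref{thm:two-eigenvectors}, and the full orbit of GMRES(1) stays in this two-term structure. This reduces the root-convergence analysis to a direct scalar computation.

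First I would dispose of the degenerate case $c_1 c_2 = 0$. Without loss of generality take $c_2 = 0$, so $r_0 = c_1 u_1$ is an eigenvector of $A$ with eigenvalue $a_1 \neq 0$ (using invertibility of $A$). A direct computation gives $\alpha(r_0) = 1/a_1$, whence $r_1 = r_0 - \alpha(r_0) A r_0 = 0$. Thus $r_k = 0$ for all $k \geq 1$, and $\varrho(r_0) = \limsup_{k\to\infty}\|r_k\|^{1/k} = 0$, as claimed.

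For the generic case $c_1 c_2 \neq 0$, I would apply Theorem \ref{thm:two-eigenvectors} to $r_0 = c_1 u_1 + c_2 u_2$ (with $\epsilon = c_2/c_1$) to obtain
\begin{equation*}
 r_2 \;=\; \Pi(r_0)\,r_0 \;=\; \lambda_{12}(\epsilon)\,r_0.
\end{equation*}
Because $\alpha$ is scale-invariant ($\alpha(cv)=\alpha(v)$ for $c\neq 0$), the residual $r_2$ is a nonzero scalar multiple of $r_0$, and so by induction $r_{2k} = \lambda_{12}(\epsilon)^k\, r_0$ for every $k\geq 0$. The odd-indexed residuals satisfy $r_{2k+1} = (I - \alpha(r_{2k})A)r_{2k} = \lambda_{12}(\epsilon)^k\, r_1$ for the same scale-invariance reason, where $\|r_1\|$ is a fixed positive constant (positive because $r_1 = 0$ would force $r_0$ to be an eigenvector, contradicting $c_1 c_2 \neq 0$).

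The final step is to extract $\varrho(r_0)$ from these closed-form expressions. Since $r^* = 0$, one has $\|r_{2k}\|^{1/(2k)} = \lambda_{12}(\epsilon)^{1/2}\,\|r_0\|^{1/(2k)}$ and $\|r_{2k+1}\|^{1/(2k+1)} = \lambda_{12}(\epsilon)^{k/(2k+1)}\,\|r_1\|^{1/(2k+1)}$; both tend to $\sqrt{\lambda_{12}(\epsilon)}$ as $k\to\infty$, yielding formula \eqref{eq:Gammaro-general}. The only subtlety to watch is the boundary case $\lambda_{12}(\epsilon)=0$, which can only happen if $a_1=a_2$ (excluded by assumption) so does not arise; and in the indefinite situation where $\epsilon^2 = -a_1/a_2$, a direct substitution into $\lambda_{12}$ gives the value $1$, consistent with stagnation. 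I expect no serious obstacle beyond bookkeeping: the nontrivial ingredient (the eigenpair identity for $\Pi$) is already supplied by Theorem \ref{thm:two-eigenvectors}.
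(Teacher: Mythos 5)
Your argument is correct and matches the approach the paper intends: in two dimensions $r_0$ is exactly the vector $z_{12}$ of Theorem \ref{thm:two-eigenvectors}, so $\Pi(r_0)r_0=\lambda_{12}(\epsilon)r_0$ and, by scale-invariance of $\alpha$, $r_{2k}=\lambda_{12}(\epsilon)^k r_0$, giving $\varrho(r_0)=\sqrt{\lambda_{12}(\epsilon)}$ — precisely the reasoning the paper uses for its worst-case theorem immediately after. Your handling of the odd-indexed residuals, the degenerate case $c_1c_2=0$, and the stagnation case $\epsilon^2=-a_1/a_2$ supplies details the paper leaves implicit, but the route is the same.
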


	\begin{theorem}
		For GMRES(1) applied to $Ax=b$, where $A$ is symmetric, the worst-case root-convergence factor is
		\begin{equation}\label{eq:lower-bound}
			\varrho^*_{GMRES(1)} =\max_{i\neq j, i,j\in \{1,2, \cdots,p\}} \frac{|a_j-a_i|}{|a_j|+|a_i|}
			=\left|\frac{\lambda_{\max}(A)- \lambda_{\min}(A)}{\lambda_{\max}(A)+\lambda_{\min}(A)}\right|,
		\end{equation}
		if $A$ is definite,  and $\varrho^*_{GMRES(1)}=1$ if $A$ is indefinite.
	\end{theorem}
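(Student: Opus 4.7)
The upper bound $\varrho^*_{GMRES(1)} \le \Gamma^*$ is already supplied by Theorem~\ref{thm:GMRES1-worst-case}, so the task reduces to exhibiting, in each of the two cases, an initial residual $r_0$ whose associated asymptotic factor $\varrho(r_0)$ attains this bound. In both cases the construction is to pick $r_0$ as a two-term combination of eigenvectors of $A$ and then propagate the eigenvector structure of the maps $\Phi$ and $\Pi$ along the GMRES(1) recursion.

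For the definite case, let $i^{*}, j^{*}$ be a pair attaining the maximum in \eqref{eq:lower-bound}. Since $a_{i^{*}}$ and $a_{j^{*}}$ share sign, $\epsilon^{*} := \sqrt{|a_{i^{*}}|/|a_{j^{*}}|}$ is real and nonzero, and I set $r_0 = u_{i^{*}} + \epsilon^{*} u_{j^{*}}$. Combining Theorem~\ref{thm:two-eigenvectors} with Lemma~\ref{lem:maxim-form} yields
\begin{equation*}
\Pi(r_0)\, r_0 = \lambda^{*}\, r_0, \qquad \lambda^{*} := \frac{(a_{j^{*}} - a_{i^{*}})^2}{(|a_{j^{*}}| + |a_{i^{*}}|)^2} = (\Gamma^{*})^2,
\end{equation*}
the last equality using $|a_{j^{*}}| + |a_{i^{*}}| = |a_{j^{*}} + a_{i^{*}}|$ in the definite case. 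Since $\alpha(\mu v) = \alpha(v)$ for every nonzero scalar $\mu$, one has $\Phi(\mu v) = \mu\, \Phi(v)$, and a straightforward induction gives $r_{2k} = (\lambda^{*})^{k}\, r_0$ and $r_{2k+1} = (\lambda^{*})^{k}\, \Phi(r_0)$ for all $k \ge 0$. Taking norms, both $\|r_{2k}\|^{1/(2k)}$ and $\|r_{2k+1}\|^{1/(2k+1)}$ tend to $\sqrt{\lambda^{*}} = \Gamma^{*}$, so $\varrho(r_0) = \Gamma^{*}$, matching the upper bound.

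For the indefinite case, I exhibit a stagnation vector for $\Phi$. By indefiniteness there exist eigenvectors $u_i, u_j$ of $A$ with $a_i a_j < 0$, and I set $r_0 = u_i + \epsilon u_j$ with $\epsilon^2 = -a_i/a_j > 0$. Then $\langle A r_0, r_0 \rangle = a_i + a_j \epsilon^2 = 0$, hence $\alpha(r_0) = 0$ and $\Phi(r_0) = r_0$. Inductively $r_k = r_0$ for every $k$, so $\|r_k\|^{1/k} \to 1$ and $\varrho(r_0) = 1$, again matching the upper bound in Theorem~\ref{thm:GMRES1-worst-case}.

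The only genuinely delicate step is the definite-case induction, but it rests on two facts already in hand: the eigenpair identity $\Pi(r_0)\, r_0 = \lambda^{*}\, r_0$ from Theorem~\ref{thm:two-eigenvectors}, and the scaling invariance $\alpha(\mu v) = \alpha(v)$. These together close the two-step recursion $r_{2(k+1)} = \Pi(r_{2k})\, r_{2k} = \lambda^{*}\, r_{2k}$ without any need to track how the coefficients of $r_k$ along $u_{i^{*}}$ and $u_{j^{*}}$ evolve at each intermediate step.
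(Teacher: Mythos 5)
Your proposal is correct and follows essentially the same route as the paper: the upper bound is taken from Theorem~\ref{thm:GMRES1-worst-case}, and the bound is attained by choosing $r_0$ to be a (nonlinear) eigenvector of the two-step map $\Pi$ with eigenvalue $(\Gamma^*)^2$ (stagnation vector with $\langle Ar_0,r_0\rangle=0$ in the indefinite case), so that $r_{2k}=(\Gamma^*)^{2k}r_0$ and $\varrho(r_0)=\Gamma^*$. You are simply more explicit than the paper about the choice $\epsilon^*=\sqrt{|a_{i^*}|/|a_{j^*}|}$, the scale invariance $\alpha(\mu v)=\alpha(v)$ that closes the induction, and the odd-index subsequence.
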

	\begin{proof}
		Consider $r_0$ such that $\Pi(r_0)r_0=(\Gamma^*)^2r_0$, where $\Gamma^*$ is defined in \eqref{eq:def-Gamma}. Then, we have $r_{2k}=(\Gamma^*)^{2k}r_0$. It follows that $\varrho(r_0)=\lim\sup_{k\rightarrow \infty} \|r_{2k}\|^{1/(2k)}=\Gamma^*$.
	\end{proof}
	When $A$ is definite, if the initial residual is only associated with the first $s$ eigenvalues $\{a_i\}_{i=1}^{s}$, then the root-convergence factor is given by \eqref{eq:lower-bound} with $i, j \in \{1, 2, \cdots, s\}$. Although when $A$ is indefinite, the worst-case root-convergence factor is one, there may exist some good initial guesses such that the corresponding root-convergence can be less than one.

	\subsection{Some results of rAA(1)}
	
	Anderson acceleration is a widely used method to speed up the convergence of fixed-point iterations and it has garnered a lot of attention for its convergence analysis \cite{walker2011anderson,toth2015convergence,anderson1965iterative, evans2020proof,sterck2021asymptotic}.
	
	In \cite{de2024convergence}, the authors study the asymptotic root-convergence factor of restarted AA(1), denoted as rAA(1). For completeness, we give rAA(1) iterations as follows:
	\begin{align*}
		x_{k+1}& = Mx_k + b, \quad  k = 0, 2, 4, \cdots \\
		x_{k+1} & = Mx_k + \frac{\langle Ar_k, r_k \rangle}{\langle Ar_k,Ar_k\rangle}M(x_k- x_{k-1}) + b,\quad \text{where} \,\, r_k=Ax_k-b, \,\,  k = 1, 3, 5, \cdots
	\end{align*}
	It can be shown that rAA(1) residuals satisfy $r_{k+2}=M(I -\alpha(r_k) A)r_k$. To analyze its asymptotic convergence factor, the authors of \cite{de2024convergence} consider the following vector-dependent matrix
	\begin{equation*}
		\Upsilon(v) =M(I -\alpha(\Psi(v)) A) M(I -\alpha(v) A),
	\end{equation*}
	where
	\begin{equation*}
		\Psi(v) = M(I -\alpha(v) A)v, \quad v\neq 0,
	\end{equation*}
	and the corresponding nonlinear eigenvalue problem is
	\begin{equation}\label{eq:GEVPUP}
		\Upsilon(u)u=\mu u.
	\end{equation}
	The residuals of rAA(1) satisfy $r_{4(k+1)}=\Upsilon(r_{4k})r_{4k}$. There remain questions about the eigenpairs of \eqref{eq:GEVPUP}. \cite{de2024convergence} proposes some conjectures. We will derive more results of rAA(1) when $A$ is symmetric and address one of the conjectures from \cite{de2024convergence}.
	
	We first consider the following nonlinear eigenvalue problem:
	\begin{equation}\label{eq:Psi-eig}
		\Psi(u)u = M(I -\alpha(u) A)u=\mu u.
	\end{equation}
	We are interested in eigenpairs ($\mu \neq 0$) of \eqref{eq:Psi-eig}. We have the following results.
	
	\begin{theorem}\label{thm:rAA1-Psi-eigpair}
		Suppose that $A$ is symmetric. Let  $i_1, i_2 \in \{1,2,\cdots, p\}$ with $i_1\neq i_2$ and $\frac{-(a_{i_2}-1)a_{i_1}}{(a_{i_1}-1)a_{i_2}}>0$. Consider 
		\begin{equation*} 
			u^*= \sum_{s=1}^{n_{i_1}} c_{i_1,s} u_{i_1,s} +  \sum_{r=1}^{n_{i_2}} c_{i_2,r} u_{i_2,r}, \quad c_{i_1,1} c_{i_2,1}\neq 0,
		\end{equation*}
		and 
		\begin{equation*} 
			u_{i_1 i_2}=   c_{i_1} u_{i_1,s_1} + c_{i_2} u_{i_2,r_1}=c_{i_1}( u_{i_1,s_1} + \epsilon u_{i_2,r_1}),
		\end{equation*}
		where	$\epsilon=\frac{c_{i_2}}{c_{i_1}}, \epsilon^2=\frac{-(a_{i_2}-1)a_{i_1}}{(a_{i_1}-1)a_{i_2}}$, and
		\begin{equation*} 
			c_{i_1}^2=\sum_{s=1}^{n_{i_1}}c_{i_1,s}^2, \quad c_{i_2}^2=\sum_{r=1}^{n_{i_1}}c_{i_1,r}^2,
			\quad s_1 \in \{1,2,\cdots, n_{i_1}\}, \quad s_2 \in \{1,2,\cdots, n_{i_2}\}.
		\end{equation*}
		Then, $u^*$ and $u_{i_1 i_2}$ are eigenvectors of \eqref{eq:Psi-eig} associated with the same eigenvalue, i.e.,
		\begin{equation} \label{eq:eig-function-Psi}
			\Psi(u^*)u^*=\mu_{i_1 i_2} u^*, \quad \Psi(u_{i_1 i_2})u_{i_1 i_2}=\mu_{i_1 i_2} u_{i_1 i_2},
		\end{equation}
		where 
		\begin{equation}\label{eq:mui1i2-Psi}
			\mu_{i_1 i_2}(\epsilon_{i_1 i_2}) =\frac{(1-a_{i_1})(1-a_{i_2})(a_{i_1}-a_{i_2})}{(a_{i_1}-1)a_{i_1} -(a_{i_2}-1)a_{i_2}}.
		\end{equation}
		Moreover,  all the eigenpairs ($\mu \neq 0$) of \eqref{eq:Psi-eig} are given by $(\mu_{i_1,i_2}, u^*)$.
	\end{theorem}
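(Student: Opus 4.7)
My plan follows the blueprint already established for $\mathcal{I}_2$ and $\Pi$ in Theorems \ref{thm:multiple-two-eigpair-I} and \ref{thm:multiple-two-eigpair}, adapted to the extra factor $M = I - A$. The key observation is that, because $A$ is symmetric, the orthonormal eigenbasis $\{u_{i,s}\}$ simultaneously diagonalizes $A$ and $M$ with $M u_{i,s} = (1-a_i)u_{i,s}$. Hence, once the scalar $\alpha(u)$ has been computed, the action of $\Psi(u) = M(I - \alpha(u)A)$ in this basis is diagonal, and the spectral analysis reduces to matching scalar factors.

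First I would reduce $u^*$ to the two-vector case: by \eqref{eq:RQ-prop-I} together with the coefficient identities \eqref{eq:coeff-square-require}, the same computation as in \eqref{eq:invariant-RQ1-I} yields $\alpha(u^*) = \alpha(u_{i_1 i_2})$. Setting $\beta := \alpha(u^*) = \alpha(u_{i_1 i_2})$ and expanding in the eigenbasis gives
\begin{equation*}
\Psi(u^*)u^* = \sum_{s=1}^{n_{i_1}}(1-a_{i_1})(1-\beta a_{i_1})\, c_{i_1,s}u_{i_1,s} + \sum_{r=1}^{n_{i_2}}(1-a_{i_2})(1-\beta a_{i_2})\, c_{i_2,r}u_{i_2,r},
\end{equation*}
and analogously for $u_{i_1 i_2}$. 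Both vectors are simultaneously eigenvectors with a common eigenvalue $\mu$ as soon as the two scalar factors $(1-a_{i_j})(1-\beta a_{i_j})$, $j=1,2$, agree. Equating them and dividing by $a_{i_1}-a_{i_2}\neq 0$ produces the single linear condition $\beta(a_{i_1}+a_{i_2}-1)=1$; imposing that this value of $\beta$ coincides with the Rayleigh-quotient expression $\alpha(u_{i_1 i_2}) = (a_{i_1}+a_{i_2}\epsilon^2)/(a_{i_1}^2+a_{i_2}^2\epsilon^2)$ collapses, after routine algebra, to $a_{i_1}(a_{i_2}-1)+\epsilon^2 a_{i_2}(a_{i_1}-1)=0$, i.e., exactly the hypothesized formula for $\epsilon^2$. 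Substituting $\beta = 1/(a_{i_1}+a_{i_2}-1)$ back into $(1-a_{i_1})(1-\beta a_{i_1})$ and using the factorization $(a_{i_1}-1)a_{i_1}-(a_{i_2}-1)a_{i_2} = (a_{i_1}-a_{i_2})(a_{i_1}+a_{i_2}-1)$ recovers \eqref{eq:mui1i2-Psi}.

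For the completeness claim I would mirror the quadratic-root argument used in Theorem \ref{thm:all-eigv-form}. If $u = \sum_{i,s}c_{i,s}u_{i,s}$ is any eigenvector of \eqref{eq:Psi-eig} with $\mu \neq 0$ and $\beta = \alpha(u)$, then the eigenvalue equation forces $(1-a_i)(1-\beta a_i) = \mu$ for every pair $(i,s)$ with $c_{i,s}\neq 0$. Since $q(t)=(1-t)(1-\beta t)-\mu$ is quadratic in $t$, the support of $u$ can meet at most two distinct eigenvalues of $A$; a single-eigenspace support is ruled out because then $u$ is an eigenvector of $A$ with $\beta = 1/a_i$, so $(I-\beta A)u = 0$ and hence $\Psi(u)u = 0 \neq \mu u$. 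Steps above then pin down the coefficients and the eigenvalue uniquely. The main obstacle I anticipate is precisely this classification step: one must carefully dispose of the degenerate possibilities $\beta = 0$ (which arises when $A$ is indefinite), the borderline $a_{i_1}+a_{i_2}=1$ in which the linear equation for $\beta$ becomes inconsistent, and the case in which $q$ has a repeated root, none of which appear in the analogous theorems for $\mathcal{I}_2$ and $\Pi$ and all of which are introduced by the new factor $M$.
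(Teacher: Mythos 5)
Your proposal follows essentially the same route as the paper's proof: equality of the Rayleigh quotients $\alpha(u^*)=\alpha(u_{i_1 i_2})$ imported from Theorem \ref{thm:multiple-two-eigpair-I}, expansion of $\Psi(u)u$ in the eigenbasis of $A$, matching the two scalar factors $(1-a_{i_j})(1-\beta a_{i_j})$ to get $\beta=\frac{1}{a_{i_1}+a_{i_2}-1}$ and hence the stated condition on $\epsilon^2$, and the at-most-two-roots argument for the quadratic $(1-t)(1-\beta t)$ to prove the completeness claim, where your explicit exclusion of single-eigenspace support and of the degenerate cases ($\beta=0$, $a_{i_1}+a_{i_2}=1$) is somewhat more careful than the paper's one-line remark. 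One caveat: substituting $\beta$ back gives $\mu=\frac{(1-a_{i_1})(a_{i_2}-1)}{a_{i_1}+a_{i_2}-1}$, which is the negative of \eqref{eq:mui1i2-Psi} as printed (since the denominator there factors as $(a_{i_1}-a_{i_2})(a_{i_1}+a_{i_2}-1)$), so your statement that the substitution ``recovers'' \eqref{eq:mui1i2-Psi} glosses over a sign discrepancy that appears to be a typo in the paper's formula rather than an error in your derivation.
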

	\begin{proof}
		From Theorem \ref{thm:multiple-two-eigpair-I}, we have $\alpha(u^*)=\alpha(u_{i_1 i_2}):=\beta$.
		Using  the second equation in \eqref{eq:eig-function-Psi} gives us 
		\begin{equation*}
			(1-a_{i_1})(1-\beta a_{i_1}) c_{i_1}u_{i_1,1}=\mu_{i_1 i_2}c_{i_1}u_{i_1,1}\quad \text{and}\,\,
			(1-a_{i_2})(1-\beta a_{i_2}) c_{i_2}u_{i_2,1}=\mu_{i_1 i_2}c_{i_2}u_{i_2,1}.	
		\end{equation*}
		Since $c_{i_1}c_{i_2}\neq 0$,
		\begin{equation}\label{eq:Psi-quad}
			(1-a_{i_1})(1-\beta a_{i_1})  =(1-a_{i_2})(1-\beta a_{i_2}).	
		\end{equation}
		Solving the above equation for $\beta$, we obtain $\beta=\frac{1}{a_{i_2}+a_{i_1}-1}$. Recall $\beta=\frac{a_{i_1}+a_{i_2}\epsilon^2}{a_{i_1}^2+a_{i_2}^2\epsilon^2}$. We further obtain $\epsilon^2=\frac{-(a_{i_2}-1)a_{i_1}}{(a_{i_1}-1)a_{i_2}}$.
		
		Thus,
		\begin{align*}
			\Psi(u^*)u^*&=\sum_{s=1}^{n_{i_1}}(1-a_{i_1}) (1-\beta a_{i_1}) c_{i_1,s}u_{i_1,s}+\sum_{r=1}^{n_{i_2}} (1-a_{i_2})(1-\beta a_{i_2}) c_{i_2,r}u_{i_2,r}\\
			&=  \sum_{s=1}^{n_{i_1}} \mu_{i_1 i_2} c_{i_1,s}u_{i_1,s}+\sum_{r=1}^{n_{i_2}} \mu_{i_1 i_2} c_{i_2,r}u_{i_2,r}\\
			&= \mu_{i_1 i_2}\left(\sum_{s=1}^{n_{i_1}}  c_{i_1,s}u_{i_1,s}+\sum_{r=1}^{n_{i_2}}  c_{i_2,r}u_{i_2,r}\right)\\
			&=\mu_{i_1 i_2} u^*.
		\end{align*}
		To satisfy \eqref{eq:Psi-quad}, there are at most two different $a_{i_1}, a_{i_2}$ existing. Thus, all eigenpairs with $\mu\neq 0$ of  \eqref{eq:Psi-eig} are given by $(\mu_{i_1,i_2}, u^*)$. 
	\end{proof}
	In the above theorem, we require that $\frac{-(a_{i_2}-1)a_{i_1}}{(a_{i_1}-1)a_{i_2}}>0$. However, if the eigenvalues of $A$ do not satisfy this condition, then it means that \eqref{eq:Psi-eig} does not have nonzero eigenvalue. This motivates us to examine the eigenpairs of \eqref{eq:GEVPUP} below.

	\begin{theorem}\label{thm:rAA1-two-eigpair}
		Suppose that $A$ is symmetric. Let $i_1, i_2 \in \{1,2,\cdots, p\}$ with $i_1\neq i_2$, and consider
		\begin{equation*} 
			u^*= \sum_{s=1}^{n_{i_1}} c_{i_1,s} u_{i_1,s} +  \sum_{r=1}^{n_{i_2}} c_{i_2,r} u_{i_2,r}, \quad c_{i_1,1} c_{i_2,1}\neq 0,
		\end{equation*}
		and 
		\begin{equation*} 
			u_{i_1 i_2}=   c_{i_1} u_{i_1,s_1} + c_{i_2} u_{i_2,r_1},
		\end{equation*}
		where	
		\begin{equation}\label{eq:sum-squares-rAA} 
			c_{i_1}^2=\sum_{s=1}^{n_{i_1}}c_{i_1,s}^2, \quad c_{i_2}^2=\sum_{r=1}^{n_{i_1}}c_{i_1,r}^2,
			\quad s_1 \in \{1,2,\cdots, n_{i_1}\}, \quad s_2 \in \{1,2,\cdots, n_{i_2}\}.
		\end{equation}
		Then, $u^*$ and $u_{i_1 i_2}$ are eigenvectors of \eqref{eq:GEVPUP} associated with the same eigenvalue, i.e.,
		\begin{equation} \label{eq:eig-function}
			\quad \Upsilon(u_{i_1 i_2})u_{i_1 i_2}=\mu_{i_1 i_2} u_{i_1 i_2},
		\end{equation}
		and
		\begin{equation}\label{eq:Upsilon-eig-combination}
			\Upsilon(u^*)u^*=\mu_{i_1 i_2} u^*,
		\end{equation}
		where 
		\begin{equation}\label{eq:mui1i2}
			\mu_{i_1 i_2}(\epsilon_{i_1 i_2}) =\frac{(a_{i_2}-a_{i_1})^2}{a_{i_1}^2+\epsilon_{i_1 i_2}^2a_{i_2}^2}\frac{((1-a_{i_1})(1-a_{i_2}))^2}{(1-a_{i_1})^2+\frac{(1-a_{i_2})^2}{\epsilon^2}}, \quad \epsilon_{i_1 i_2} =\frac{c_{i_1}}{c_{i_2}},
		\end{equation}
		and 
		\begin{equation}\label{eq:maxmui1i2}
			\max_{i_1 i_2}\mu_{i_1 i_2}(\epsilon_{i_1 i_2}) 
			=\mu_{i_1 i_2}\left(\pm \sqrt{\left|\frac{(1-a_{i_2})a_{i_1}}{(1-a_{i_1})a_{i_2}}\right|}\right)
			= \left( \frac{(1-a_{i_1})(1-a_{i_2})(a_{i_1}-a_{i_2})}{|(1-a_{i_1})a_{i_1}| +|(1-a_{i_2})a_{i_2}| } \right)^2.
		\end{equation}
	\end{theorem}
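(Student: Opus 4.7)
The plan is to mirror the two-stage strategy used in the proof of Theorem \ref{thm:multiple-two-eigpair}: first settle the reduced two-vector instance $u_{i_1 i_2}$, and then lift the conclusion to the general combination $u^*$ by invoking the $\alpha$-invariance of Theorem \ref{thm:multiple-two-eigpair-I} twice, once for each of the two step indices appearing in $\Upsilon$. The structural fact driving everything is that $M = I - A$ shares the eigenvectors of $A$ with eigenvalues $1 - a_i$, so any operator of the form $M(I - \beta A)$ acts diagonally on each eigenspace of $A$ and preserves the property of being supported on a fixed pair of eigenspaces.

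For the two-vector case $z = c_{i_1} u_{i_1,1} + c_{i_2} u_{i_2,1}$, I would compute directly: $\Psi(z) = (1-a_{i_1})(1-\alpha(z) a_{i_1}) c_{i_1} u_{i_1,1} + (1-a_{i_2})(1-\alpha(z) a_{i_2}) c_{i_2} u_{i_2,1}$, which is of the same form as $z$, and then $\Upsilon(z) z$ has coefficient of $u_{i_k,1}$ equal to $(1-a_{i_k})^2 (1-\alpha(\Psi(z)) a_{i_k})(1-\alpha(z) a_{i_k}) c_{i_k}$. Forcing the $u_{i_1,1}$ and $u_{i_2,1}$ coefficients to stand in the ratio $c_{i_1}:c_{i_2}$ identifies $\mu_{i_1 i_2}$ as the common scalar; substituting the closed forms of $\alpha(z)$ and $\alpha(\Psi(z))$ from \eqref{eq:form-alphazij} and simplifying recovers \eqref{eq:mui1i2}. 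This is essentially the Theorem \ref{thm:two-eigenvectors} calculation with an extra factor of $M$ inserted at each step.

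To promote the result to the general $u^*$, I would first apply Theorem \ref{thm:multiple-two-eigpair-I} to get $\alpha(u^*) = \alpha(u_{i_1 i_2})$. Because $M(I - \alpha(u^*) A)$ scales every $u_{i_k,s}$ within the $i_k$-eigenspace by the common factor $(1-a_{i_k})(1-\alpha(u^*) a_{i_k})$, the vector $\Psi(u^*)$ is again supported on exactly the $i_1, i_2$ eigenspaces, and its squared coefficient sums are $[(1-a_{i_k})(1-\alpha(u^*) a_{i_k})]^2 c_{i_k}^2$, which match those of $\Psi(u_{i_1 i_2})$. A second application of Theorem \ref{thm:multiple-two-eigpair-I} therefore yields $\alpha(\Psi(u^*)) = \alpha(\Psi(u_{i_1 i_2}))$. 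With both $\alpha$-values identified between $u^*$ and $u_{i_1 i_2}$, a componentwise calculation identical in spirit to the one in Theorem \ref{thm:multiple-two-eigpair} then delivers $\Upsilon(u^*) u^* = \mu_{i_1 i_2} u^*$ with the same $\mu_{i_1 i_2}$ as in the two-vector step.

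Finally, for the maximization \eqref{eq:maxmui1i2}, I would substitute $\tau = \epsilon^2$ and clear the $\epsilon$ in the second fraction so that $\mu_{i_1 i_2}$ becomes $(a_{i_2}-a_{i_1})^2 ((1-a_{i_1})(1-a_{i_2}))^2 \tau$ divided by $(a_{i_1}^2 + a_{i_2}^2 \tau)((1-a_{i_1})^2 \tau + (1-a_{i_2})^2)$. Expanding the denominator and dividing by $\tau$ leaves a $\tau$-independent part $a_{i_1}^2(1-a_{i_1})^2 + a_{i_2}^2 (1-a_{i_2})^2$ plus the $\tau$-dependent part $a_{i_2}^2 (1-a_{i_1})^2 \tau + a_{i_1}^2 (1-a_{i_2})^2 / \tau$, to which AM-GM applies: its minimum $2|a_{i_1} a_{i_2} (1-a_{i_1})(1-a_{i_2})|$ is attained at $\tau^* = |a_{i_1}(1-a_{i_2})/(a_{i_2}(1-a_{i_1}))|$. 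Adding back the $\tau$-independent terms completes the square to $(|a_{i_1}(1-a_{i_1})| + |a_{i_2}(1-a_{i_2})|)^2$, and substitution recovers \eqref{eq:maxmui1i2}. The main obstacle in this plan is not the algebra but the careful verification that $\Psi$ preserves the two-eigenspace structure together with the squared-coefficient invariance of Theorem \ref{thm:multiple-two-eigpair-I}, since this is the step that legalizes the reduction from the general $u^*$ to the two-vector $u_{i_1 i_2}$; once that invariance is secured, the rest reduces to the same bookkeeping already carried out in the GMRES(1) proofs.
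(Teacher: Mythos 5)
Your proposal is correct and follows essentially the same route as the paper: the decisive step---lifting the two-vector eigenpair to the general $u^*$ by using the matching squared coefficient sums to get $\alpha(u^*)=\alpha(u_{i_1 i_2})$ and then, after one application of $\Psi$, $\alpha(\Psi(u^*))=\alpha(\Psi(u_{i_1 i_2}))$, followed by the componentwise identification of the common factor $(1-a_{i_k})^2(1-\beta_2 a_{i_k})(1-\beta_1 a_{i_k})=\mu_{i_1 i_2}$---is exactly the paper's argument. The only divergence is that the paper imports the two-vector relation \eqref{eq:eig-function}, the formula \eqref{eq:mui1i2}, and the maximum \eqref{eq:maxmui1i2} from \cite{de2024convergence}, whereas you verify them directly (your AM--GM treatment of the denominator is the right computation); just be aware that the proportionality of the two components of $\Upsilon(z)z$ for \emph{every} coefficient ratio is a nontrivial identity to be checked by substituting the closed forms of $\alpha(z)$ and $\alpha(\Psi(z))$, not something that can merely be ``forced.''
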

	\begin{proof}
		From Theorem \ref{thm:multiple-two-eigpair-I}, we have $\alpha(u^*)=\alpha(u_{i_1 i_2})$.	Consider
		\begin{equation*}
			u^*_1 = \Psi(u^*)=\sum_{s=1}^{n_{i_1}} (1-a_{i_1})(1-\alpha(u^*)a_{i_1}) c_{i_1,s} u_{i_1,s} + \sum_{r=1}^{n_{i_2}} (1-a_{i_2})(1-\alpha(u^*)a_{i_2}) c_{i_2,r} u_{i_2,r},
		\end{equation*}
		and 
		\begin{equation*}
			u_{i_1 i_2,1}=\Psi(u_{i_1 i_2})=(1-a_{i_1})(1-\alpha(u^*)a_{i_1}) c_{i_1} u_{i_1,1} + (1-a_{i_2}) (1-\alpha(u^*)a_{i_2}) c_{i_2}u_{i_2,1}.
		\end{equation*}
		Using condition \eqref{eq:sum-squares-rAA}, we have
		\begin{align*}
			& \sum_{s=1}^{n_{i_1}} (1-a_{i_1})^2(1-\alpha(u^*)a_{i_1})^2 c_{i_1,s}^2 + \sum_{r=1}^{n_{i_2}} (1-a_{i_2})^2(1-\alpha(u^*)a_{i_2})^2 c_{i_2,r}^2\\
			& = 
			(1-a_{i_1})^2(1-\alpha(u^*)a_{i_1})^2 c_{i_1}^2+  (1-a_{i_2})^2(1-\alpha(u^*)a_{i_2})^2 c_{i_2}^2.
		\end{align*}
		It follows that $\alpha(u^*_1)=\alpha( u_{i_1 i_2,1})$. For simplicity, let $\beta_2=\alpha(u^*_1)$ and $\beta_1=\alpha(u^*)$.
		The results \eqref{eq:eig-function},   \eqref{eq:mui1i2}, and \eqref{eq:maxmui1i2} are from \cite{de2024convergence}. Using  \eqref{eq:eig-function} gives us 
		\begin{align*}
			(1-a_{i_1})^2(1-\beta_2 a_{i_1})(1-\beta_1a_{i_1}) c_{i_1}u_{i_1,1}&=\mu_{i_1 i_2}c_{i_1}u_{i_1,1}, \\
			(1-a_{i_2})^2(1-\beta_2 a_{i_2})(1-\beta_1a_{i_2}) c_{i_2}u_{i_2,1}&=\mu_{i_1 i_2}c_{i_2}u_{i_2,1}.	
		\end{align*}
		Since $c_{i_1} c_{i_2}\neq 0$,
		\begin{equation*}
			(1-a_{i_1})^2(1-\beta_2 a_{i_1})(1-\beta_1a_{i_1})   =\mu_{i_1 i_2}, \quad
			(1-a_{i_2})^2(1-\beta_2 a_{i_2})(1-\beta_1a_{i_2})  =\mu_{i_1 i_2}.	
		\end{equation*}
		Similar to the proof of Theorem \ref{thm:rAA1-Psi-eigpair}, we can obtain \eqref{eq:Upsilon-eig-combination}.
	\end{proof}
	From Theorems \ref{thm:rAA1-Psi-eigpair} and \ref{thm:rAA1-two-eigpair}, we see that the eigenvectors of \eqref{eq:Psi-eig} are also eigenvectors of \eqref{eq:GEVPUP}.
	
	\begin{remark}
		It can be shown that when the maximum of $\mu_{i_1,i_2}(\epsilon_{i_1 i_2})$ is achieved, see \eqref{eq:maxmui1i2}, $\beta_1=\beta_2$.
	\end{remark}
	\begin{theorem}\label{thm:all-eig-Upsilon}
		Suppose that  $A$ is symmetric. Then, all the eigenvectors of \eqref{eq:GEVPUP} associated with nonzero eigenvalues are given by 
		\begin{equation}\label{eq:all-eig-form-rAA}
			u= \sum_{i\in \{1,2,\cdots, p\}} \sum_{s=1}^{n_{i}} c_{i,s} u_{i,s}, 
		\end{equation}
		where $2\leq \#\{i\}\leq 4$,  and at least one term in $\{c_{i_1,s}\}_{s=1}^{n_{i_1}}$ is nonzero and at least one term in $\{c_{i_2,r}\}_{r=0}^{n_{i_2}}$ is nonzero. 
	\end{theorem}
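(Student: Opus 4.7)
The plan is to mirror the strategy used in Theorem \ref{thm:all-eigv-form} for the GMRES(1) operator $\Pi$ (which yielded $\#\{i\}\le 2$), and adjust for the two extra factors of $M=I-A$ appearing in $\Upsilon$. Writing $u=\sum_i\sum_s c_{i,s}u_{i,s}$, setting $\mathcal{S}=\{i:\exists s,\ c_{i,s}\neq 0\}$, and letting $\beta_1=\alpha(u)$ and $\beta_2=\alpha(\Psi(u))$, the identities $Au_{i,s}=a_i u_{i,s}$ and $Mu_{i,s}=(1-a_i)u_{i,s}$ diagonalize the action of $\Upsilon$:
\begin{equation*}
\Upsilon(u)u=\sum_{i\in\mathcal{S}}(1-a_i)^2(1-\beta_1 a_i)(1-\beta_2 a_i)\sum_{s}c_{i,s}u_{i,s}.
\end{equation*}

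Equating this with $\mu u$ and exploiting orthonormality of $\{u_{i,s}\}$ forces, for every $i\in\mathcal{S}$, the scalar identity $(1-a_i)^2(1-\beta_1 a_i)(1-\beta_2 a_i)=\mu$. Thus each $a_i$ with $i\in\mathcal{S}$ is a real root of the polynomial
\begin{equation*}
P(b)=(1-b)^2(1-\beta_1 b)(1-\beta_2 b)-\mu.
\end{equation*}
Since $P$ has degree at most four, it admits at most four distinct roots, yielding the upper bound $\#\mathcal{S}\le 4$. This is the exact analogue of the degree-two argument in Theorem \ref{thm:all-eigv-form}, with the two $M$-factors raising the degree by two.

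For the lower bound, suppose for contradiction that $\#\mathcal{S}=1$, say $\mathcal{S}=\{i_0\}$. Then $Au=a_{i_0}u$; since $A$ is invertible, $a_{i_0}\neq 0$, so $\beta_1=1/a_{i_0}$ and $(I-\beta_1 A)u=0$. Consequently $\Psi(u)=0$, the scalar $\beta_2=\alpha(\Psi(u))$ is undefined, and $\Upsilon(u)u$ cannot be a well-defined eigenpair with $\mu\neq 0$. Hence $\#\mathcal{S}\ge 2$, completing the bound $2\le\#\{i\}\le 4$.

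The main obstacle is the case analysis when $\beta_1$ or $\beta_2$ vanishes, which reduces the degree of $P$ and requires a separate check that $\Psi(u)\neq 0$ so the iteration is still defined. These degenerate sub-cases are handled exactly as in the indefinite subcase of Theorem \ref{thm:all-eigv-form}, using that $\alpha(v)=0$ precisely when $\langle Av,v\rangle=0$; the reduced degree gives the strictly sharper bounds $\#\mathcal{S}\le 3$ (if exactly one of $\beta_1,\beta_2$ is zero) or $\#\mathcal{S}\le 2$ (if both vanish), each consistent with the asserted $\#\mathcal{S}\le 4$.
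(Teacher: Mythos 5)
Your proof is correct and follows essentially the same route as the paper: diagonalize $\Upsilon(u)u$ in the eigenbasis of $A$, observe that each active eigenvalue $a_i$ must be a root of the (at most quartic) polynomial $(1-b)^2(1-\beta_1 b)(1-\beta_2 b)-\mu$, and rule out a single eigenspace because then $\Psi(u)=0$ and $\beta_2$ is undefined. If anything, your explicit treatment of the degenerate cases $\beta_1=0$ or $\beta_2=0$ is slightly more careful than the paper, which simply asserts $\beta_1\beta_2\neq 0$.
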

	The formula \eqref{eq:all-eig-form-rAA} means that the eigenvectors of \eqref{eq:GEVPUP} are any linear combination of eigenvectors of $A$ associated with at least two and at most four  distinguish eigenvalues of $A$.
	
	\begin{proof}
		As shown in Theorem \ref{thm:all-eigv-form},  $u_*=\sum_{s=1}^{n_i}c_{i,s}u_{i,s}\neq 0$, where  $i \in \{1,2,\cdots, p\}$  cannot be eigenvectors of \eqref{eq:GEVPUP} ($\alpha(0)$ is not defined).  Assume that the eigenvector of \eqref{eq:GEVPUP} is given by
		\begin{equation}\label{eq:nonzero-ci-rAA}
			u^*=\sum_{i=1}^p  \sum_{s=1}^{n_{i}} c_{i,s} u_{i,s},
		\end{equation}
		which contains at least two different integers $i,j$ such that $c_{i,s} c_{j,t}\neq 0$. 
		Next, we prove that in \eqref{eq:nonzero-ci-rAA}, there are at most four different integers $i$.  For simplicity, let $\beta_1=\alpha(u^*)$ and $\beta_2=\alpha(M(I -\alpha(u^*) A))u^*)$. We know that $\beta_1\beta_2\neq 0$. Using $\Upsilon (u^*)u^*= \lambda  u^*$ with $\lambda\neq 0$,
		we have
		\begin{equation}\label{eq:component-wise-term-rAA}
			c_{i,s}(1-a_i)^2(1-\beta_2 a_i) (1-\beta_1 a_i)u_{i,s}=\lambda c_{i,s}u_{i,s}, \forall (i,s)\in \{1,2,\cdots, p\} \times \{1,2,\cdots, n_i\}.  
		\end{equation}
		Consider the quartic function $g(b)=(1-\beta_2b)(1-\beta_1 b)(1-b)^2$.  For a given value $g^*$, there are at most four different values $b_1,b_2$ such that $g(b_1)=g(b_2)=g(b_3)=g(b_4)=g^*$. To guarantee \eqref{eq:component-wise-term},  there are at most four different integers $i, j, k, \ell$ such that $c_{i,s_1} c_{j,t_1}c_{k,s_2} c_{\ell,t_2}\neq 0$, where $s\in \{1,2, \cdots, n_i\}$ and $t\in \{1,2, \cdots, n_j\}$.
		
		%
		
		Thus, all the eigenvectors of \eqref{eq:GEVPUP} associated with nonzero eigenvalues are any linear combination of eigenvectors of $A$ associated with at least two and at most four distinguish eigenvalues of $A$.
	\end{proof}
	We comment that in Theorem \ref{thm:all-eig-Upsilon} we do not know whether \eqref{eq:all-eig-form-rAA} can contain three or four eigenvectors associated with three or four distinguish eigenvalues of $A$. If there exists, then what the corresponding eigenvalues of $\Upsilon$ are. We leave these questions for the future.
	
	Define 
	\begin{equation*}
		\Lambda^*=\max_{i,j \in \{1, 2, \cdots, p\} } \left( \frac{(1-a_{i})(1-a_{j})(a_{i}-a_{j})}{|(1-a_{i})a_{i}| +|(1-a_{j})a_{j}| } \right)^{2}.
	\end{equation*}
	In \cite{de2024convergence}, it has shown that worst-case convergence factor of rAA(1) has a lower bound $(\Lambda^*)^{1/4}$. The numerical results in that work seem to indicate that $(\Lambda^*)^{1/4}$ is the worst-case convergence factor.
	Moreover, it includes a conjecture stating that
	\begin{equation}\label{eq:conjecture-w}
		\max_{v\neq 0} \frac{\|\Upsilon(v)v\|}{\|v\|}= \Lambda^*.
	\end{equation}
	In this following, we provide three counterexamples for \eqref{eq:conjecture-w}. We consider $A$ being $\mathcal{A}_i$ as follows.
	\begin{example}[counterexamples]  
		\begin{equation*}
			\mathcal{A}_1=	\begin{pmatrix}
				1 & 0 &0\\
				0 & 2 & 0\\
				0 & 0 & 3
			\end{pmatrix},\quad
			\mathcal{A}_2=	\begin{pmatrix}
				-1/2 & 0 &0\\
				0 & -4 & 0\\
				0 & 0 & 2
			\end{pmatrix},\quad
			\mathcal{A}_3=	\begin{pmatrix}
				1/2 & 0 &0 &0\\
				0 & 3/2 & 0 &0\\
				0 & 0 & 1/3 & 0\\
				0& 0& 0 & -2
			\end{pmatrix}.
		\end{equation*}
	\end{example}
	First, it can be shown that $\Lambda^*(\mathcal{A}_1)=\frac{1}{16}=0.0625$. Consider $v_1=[15,\quad  5,  \quad 1]^T$ and $M_1=I-\mathcal{A}_1$. Then,
	\begin{align*}
		&	\alpha(v_1)=\frac{v_1^T\mathcal{A}_1v_1}{v_1^TA_1^T\mathcal{A}_1v_1}=\frac{139}{167}, \quad u=M_1(I-\alpha(v_1)\mathcal{A}_2)v_1=\left[0,  \quad \frac{555}{167}, \quad  \frac{500}{167}\right]^T,\\
		&	\alpha(u)=\frac{u^T\mathcal{A}_1u}{u^T\mathcal{A}_1^T\mathcal{A}_1u}=\frac{\frac{ 1366050  }{27889}}{\frac{
				3482100  }{27889}}=\frac{1366050}{3482100}=\frac{27321}{69642},\\
		&	w=M_1(I-\alpha(u)\mathcal{A}_1)u=\left[0, \quad \frac{-1387500}{1938369}, \quad \frac{205350}{193837}\right]\approx[0 \quad  -0.7158 \quad 1.0594]^T.
	\end{align*}
	Thus, $ \frac{\|\Upsilon(v_1)v_1\|}{\|v_1\|}\approx\frac{ 1.2786}{15.8430}\approx 0.0807>\Lambda^*(\mathcal{A}_1)=0.0625$. For more accuracy, one may verify if $\|\Upsilon(v_1)v_1\|^2=\|v_1\|^2(\Lambda^*)^2$, that is, $\|w\|^2 16^2=\|v_1\|^2$, which gives 
	$$16^2( (1387500\times193837)^2+ (205350\times1838369)^2)=(15^2+5^2+1^2)(1938369\times193837)^2.$$
	We notice that the left-hand side is even, while the right-hand side is odd; therefore, they cannot be equal.
	
	Consider $v_2=[38,\quad  1,  \quad 45]^T$ for $\mathcal{A}_2$ and  $v_3=[23, \quad 60, \quad 77, \quad 1]^T$ for $\mathcal{A}_3$. It can be shown that
	\begin{align*}
		\Lambda^*(\mathcal{A}_2)&= 225/121  \approx   1.8595, \quad \frac{\|\Upsilon(v_2)v_2\|}{\|v_2\|}\approx 2.1598>\Lambda^*(\mathcal{A}_2),\\
		\Lambda^*(\mathcal{A}_3)&=\frac{49}{81}\approx 0.6049,  \quad \frac{\|\Upsilon(v_3)v_3\|}{\|v_3\|}\approx 0.6388>\Lambda^*(\mathcal{A}_3).
	\end{align*}
	
	Although $\displaystyle\max_{v\neq 0} \frac{\|\Upsilon(v)v\|}{\|v\|}>\Lambda^*$, this does not indicate that the worst-case root-convergence of  rAA(1) is larger or equals  $\left(\max_{v\neq 0} \frac{\|\Upsilon(v)v\|}{\|v\|}\right)^{1/4}$. Here, we cannot have a similar Lemma \ref{lem:RQ-max} for rAA(1) since $\alpha(v)$ for rAA(1) does not minimize the residual.  Note that for $\mathcal{A}_1$,  $ w=\Upsilon(v_1)v_1$ is an eigenvector of \eqref{eq:GEVPUP}. From Theorem \ref{thm:rAA1-two-eigpair}, we have $\Upsilon(w)w=\mu w$. This means that for $\mathcal{A}_1^{-1}v_1$ as an initial guess for solving $\mathcal{A}_1x=0$, the corresponding root-convergence factor of rAA(1) is $\varrho=(\mu)^{1/4}<\left(\max_{v\neq 0} \frac{\|\Upsilon(v)v\|}{\|v\|}\right)^{1/4}$.

	\section{Skew-symmetric case of $M$ for GMRES(1)}\label{sec:skew}
	In this section, we discuss the situation that $M=I-A \in \mathbb{R}^{n\times n}$ is skew-symmetric, i.e., $M=-M^T$. For simplicity, we assume that $n$ is even. Then, the eigenvalues of $M$ are imaginary.  Denote the eigenvalues of $M$ as $\{m_j,\bar{m}_j\}_{j=0}^{n/2}$, where $\bar{m}_j$ is the conjugate of $m_j$. We define
	\begin{equation*}
		m_*=\min_{j=1,2,\cdots,n/2}\{|m_j|\}, \quad m^*=\max_{j=1,2,\cdots,n/2}\{|m_j|\}.
	\end{equation*}
	Let $ M=QH Q^T$ be the real Schur decomposition of $M$, where $H\in \mathbb{R}^{n\times n}$ is a block diagonal matrix with the diagonal blocks of size $2\times 2$. The eigenvalues of the $2\times 2$ diagonal blocks are the eigenvalues of $M$.  $Q$ is an orthogonal matrix with
	\begin{equation}\label{eq:schur-complement}
		Q=[Q_1,\cdots, Q_{n/2}], \quad Q_j=[q_j,\tilde{q}_j].
	\end{equation}
	From \cite{de2024convergence}, we know that $M$ is invariant on the subspaces span$\{Q_j\}$ for $j=1,2,\cdots, n/2$ and the eigenvalues of $S=\frac{1}{2}(A^{-1}+A^{-T})$ are $\{ (1+|m_j|^2)^{-1}\}_{j=1}^{n/2}$. We cite some results from \cite{de2024convergence}, which will be used for our analysis.
	\begin{lemma} \cite{de2024convergence}
		Suppose that $M$ is skew-symmetric and $q\in {\rm span}\{Q_j\}$. Then $\alpha(q)=\frac{1}{1+|m_j|^2}$, and
	 for general $0\neq v\in \mathbb{R}^n$, we have $\alpha(v) \in \left[\frac{1}{1+(m^*)^2}, \frac{1}{1+(m_*)^2}\right]\subseteq (0,1].$
	\end{lemma}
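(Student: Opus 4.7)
The proof plan exploits the fact that skew-symmetry of $M=I-A$ forces $\langle v,Mv\rangle = 0$ for every $v\in\mathbb{R}^n$, which decouples the numerator and denominator of $\alpha(v)$ into purely norm quantities. Writing $A = I-M$, I would first compute
\begin{equation*}
\langle v,Av\rangle = \|v\|^2 - \langle v,Mv\rangle = \|v\|^2, \qquad \|Av\|^2 = \|v\|^2 - 2\langle v,Mv\rangle + \|Mv\|^2 = \|v\|^2 + \|Mv\|^2,
\end{equation*}
so that in this setting
\begin{equation*}
\alpha(v) = \frac{\|v\|^2}{\|v\|^2 + \|Mv\|^2}.
\end{equation*}
This simple formula is the workhorse for both statements.

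For the first assertion, I would use the real Schur decomposition $M = QHQ^T$ in \eqref{eq:schur-complement}: each $2\times 2$ diagonal block of $H$ corresponding to the conjugate pair $\{m_j,\bar m_j\}$ is (up to orthogonal change of basis within $\mathrm{span}\{Q_j\}$) the skew block $\begin{pmatrix}0 & -|m_j|\\ |m_j| & 0\end{pmatrix}$, which acts as $|m_j|$ times a rotation. Since $M$ leaves $\mathrm{span}\{Q_j\}$ invariant and acts there as a scaled isometry, $q\in\mathrm{span}\{Q_j\}$ yields $\|Mq\| = |m_j|\,\|q\|$. Substituting into the formula for $\alpha$ gives $\alpha(q) = 1/(1+|m_j|^2)$, as claimed.

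For the general bound, I would decompose an arbitrary nonzero $v$ along the $M$-invariant subspaces: $v=\sum_{j=1}^{n/2}Q_j y_j$ for some $y_j\in\mathbb{R}^2$. Orthogonality of $Q$ and invariance of each block give
\begin{equation*}
\|v\|^2 = \sum_{j=1}^{n/2}\|y_j\|^2, \qquad \|Mv\|^2 = \sum_{j=1}^{n/2}|m_j|^2\,\|y_j\|^2.
\end{equation*}
Hence
\begin{equation*}
\alpha(v) = \frac{\sum_j \|y_j\|^2}{\sum_j (1+|m_j|^2)\|y_j\|^2} = \sum_{j}\frac{1}{1+|m_j|^2}\cdot \omega_j, \qquad \omega_j := \frac{(1+|m_j|^2)\|y_j\|^2}{\sum_k (1+|m_k|^2)\|y_k\|^2},
\end{equation*}
which expresses $\alpha(v)$ as a convex combination of the per-subspace values $1/(1+|m_j|^2)$. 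Consequently $\alpha(v)$ lies between $1/(1+(m^*)^2)$ and $1/(1+(m_*)^2)$, and since $m_*>0$ (as $A$ is invertible, so $1$ is not an eigenvalue of $M$, and in particular $|m_j|$ can only be $0$ if $M$ has a zero eigenvalue, which is handled by the convention), the upper bound is at most $1$. The whole argument is essentially computational; the only subtle step is justifying the identity $\|Mq\|=|m_j|\|q\|$ on each invariant subspace, which I expect to be the main point to make explicit.
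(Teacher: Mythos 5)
Your argument is correct and complete. Note that the paper itself gives no proof of this lemma—it is simply cited from the reference—so there is nothing internal to compare against; your derivation supplies the missing details in the natural way: skew-symmetry kills $\langle v,Mv\rangle$, giving $\alpha(v)=\|v\|^2/(\|v\|^2+\|Mv\|^2)$, the real Schur form shows $M$ acts on each $\mathrm{span}\{Q_j\}$ as $|m_j|$ times an isometry (so $\|Mq\|=|m_j|\,\|q\|$, yielding $\alpha(q)=1/(1+|m_j|^2)$), and the convex-combination identity with weights $\omega_j$ pins $\alpha(v)$ between the extreme per-block values. One small cleanup: the closing remark about $m_*>0$ is unnecessary and slightly off—the containment in $(0,1]$ follows immediately from $(m_*)^2\ge 0$ and $(m^*)^2<\infty$, and $m_*=0$ is perfectly possible for a singular skew-symmetric $M$ (invertibility of $A=I-M$ is automatic since the eigenvalues of $M$ are purely imaginary, so it gives no information about $m_*$); the bound $1/(1+(m_*)^2)\le 1$ holds regardless.
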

	
	\begin{theorem}\label{thm:skew-general-Phi-bound}
		Suppose that $M$ is skew-symmetric. If $0 \neq q \in {\rm span}\{Q_j\}$,  then
		\begin{equation*}
			\Phi(q) \in {\rm span}\{Q_j\}\quad \text{and} \quad \frac{\|\Phi(q)\|}{\|q\|}=\frac{|m_j|}{\sqrt{1+|m_j|^2}}.
		\end{equation*}
		Moreover,
		\begin{equation}\label{eq:q-max}
			\max_{v\neq 0} \frac{\|\Phi(v)\|}{\|v\|}\leq \frac{m^*}{\sqrt{1+(m^*)^2}}.
		\end{equation}
	\end{theorem}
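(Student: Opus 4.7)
The plan is to exploit the real Schur decomposition $M = QHQ^T$ to split every vector into mutually orthogonal blocks on which $M$ acts like a $2\times 2$ skew-symmetric matrix, then use the variational characterization of $\alpha(v)$ as the minimizer of $\|(I - cA)v\|$ over $c$.

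First I would write $A = I - M$ so that $\Phi(v) = (I-\alpha(v)A)v = (1-\alpha(v))v + \alpha(v)Mv$, and record the key identity $\langle v, Mv\rangle = 0$ coming from skew-symmetry. Decomposing $v = \sum_{j=1}^{n/2} v_j$ with $v_j \in \mathrm{span}\{Q_j\}$ (these components are mutually orthogonal because $Q$ is orthogonal), and using the fact that $M$ is invariant on each $\mathrm{span}\{Q_j\}$ with $\|Mq\|=|m_j|\|q\|$ for $q \in \mathrm{span}\{Q_j\}$ (since the $2\times 2$ diagonal block has singular values both equal to $|m_j|$), I obtain the clean expression
\begin{equation*}
\|(I-cA)v\|^2 = \sum_{j=1}^{n/2}\bigl[(1-c)^2 + c^2|m_j|^2\bigr]\,\|v_j\|^2
\end{equation*}
for every scalar $c$, where the cross term vanishes by skew-symmetry.

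For the first assertion, if $0\neq q\in\mathrm{span}\{Q_j\}$, invariance of $M$ on $\mathrm{span}\{Q_j\}$ immediately yields $\Phi(q)\in\mathrm{span}\{Q_j\}$. By the cited lemma, $\alpha(q) = 1/(1+|m_j|^2)$; plugging $c = \alpha(q)$ into the boxed identity (with only the $j$-th term nonzero) gives $\|\Phi(q)\|^2/\|q\|^2 = (1-\alpha(q))^2 + \alpha(q)^2|m_j|^2 = |m_j|^2/(1+|m_j|^2)$, which is the claimed ratio.

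For the second assertion, I use that $\alpha(v)$ is, by definition, the minimizer of $\|(I-cA)v\|$ in $c$, so
\begin{equation*}
\|\Phi(v)\|^2 \leq \sum_{j=1}^{n/2}\bigl[(1-c)^2 + c^2|m_j|^2\bigr]\,\|v_j\|^2 \qquad \text{for any } c\in\mathbb{R}.
\end{equation*}
Choosing the single scalar $c = 1/(1+(m^*)^2)$, a short calculation shows $(1-c)^2 + c^2|m_j|^2 = \bigl((m^*)^4 + |m_j|^2\bigr)/(1+(m^*)^2)^2 \leq (m^*)^2/(1+(m^*)^2)$, with the bound uniform in $j$ since $|m_j|\leq m^*$. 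Summing over $j$ gives $\|\Phi(v)\|^2 \leq (m^*)^2/(1+(m^*)^2)\cdot \|v\|^2$, which is \eqref{eq:q-max}.

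The only slightly subtle point is the uniform-in-$j$ choice of $c$: since $\alpha(v)$ is a single scalar that must work for all blocks simultaneously, one cannot hope to achieve the per-block minimum, so the main obstacle is producing one $c$ that bounds every coefficient $(1-c)^2 + c^2|m_j|^2$ by $(m^*)^2/(1+(m^*)^2)$. The choice $c = 1/(1+(m^*)^2)$ does this because the function $f_m(c) = (1-c)^2 + c^2 m^2$ is monotone in $m^2$ at any fixed $c$, so the worst case over $j$ is realized at $|m_j| = m^*$, where this specific $c$ is exactly the minimizer.
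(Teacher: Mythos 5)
Your argument is correct, and it is actually more than the paper provides: the paper omits the proof of this theorem entirely, stating only that it ``can be derived following'' the cited rAA(1) reference. Your route is the natural self-contained one and almost certainly mirrors the spirit of that reference: write $(I-cA)v=(1-c)v+cMv$, kill the cross term via $\langle v,Mv\rangle=0$, and use orthogonality of the Schur blocks together with $\|Mv_j\|=|m_j|\,\|v_j\|$ to get
\begin{equation*}
\|(I-cA)v\|^2=\sum_{j=1}^{n/2}\bigl[(1-c)^2+c^2|m_j|^2\bigr]\|v_j\|^2 ,
\end{equation*}
from which the block case follows by substituting $\alpha(q)=1/(1+|m_j|^2)$, and the global bound follows from the variational characterization $\|\Phi(v)\|=\min_{c}\|(I-cA)v\|$ (the same fact the paper invokes in Lemma~2.3) with the single test value $c=1/(1+(m^*)^2)$. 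All the computations check out: $(1-\alpha)^2+\alpha^2|m_j|^2=|m_j|^2/(1+|m_j|^2)$, and $\bigl((m^*)^4+|m_j|^2\bigr)/(1+(m^*)^2)^2\le (m^*)^2/(1+(m^*)^2)$ exactly when $|m_j|\le m^*$. Your closing remark correctly identifies the only delicate point, namely that one scalar $c$ must serve all blocks, and your monotonicity observation handles it. One cosmetic addition you could make for completeness: note that $A=I-M$ is invertible (its eigenvalues are $1-m$ with $m$ purely imaginary), so $Av\neq 0$ and $\alpha(v)$, hence $\Phi(v)$, is well defined for every $v\neq 0$.
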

	\begin{proof}
		The proof can be derived following \cite{de2024convergence}. Thus, we omit it here.
	\end{proof}
	
	\begin{corollary}\label{cor:MRI-skew-worst-case}
		Consider GMRES(1) applied to linear system $Ax=b$ with $M=I-A$ being skew-symmetric. Then, the worst-case root-convergence factor of GMRES(1) is 
		\begin{equation}\label{eq:r-factor-M-skew}
			\varrho^*_{GMRES(1),ss}=\frac{m^*}{\sqrt{1+(m^*)^2}}<1.
		\end{equation}
	\end{corollary}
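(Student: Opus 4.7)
The plan is to combine the two halves of Theorem~\ref{thm:skew-general-Phi-bound} into a matching upper and lower bound on $\varrho^*_{GMRES(1),ss}$, and then note the strict inequality $< 1$ is automatic.

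First I would establish the upper bound. For any initial residual $r_0$, inequality~\eqref{eq:q-max} applied to each iterate gives
\begin{equation*}
\|r_{k+1}\| = \|\Phi(r_k)\| \leq \frac{m^*}{\sqrt{1+(m^*)^2}}\,\|r_k\|,
\end{equation*}
so by induction $\|r_k\| \leq \bigl(m^*/\sqrt{1+(m^*)^2}\bigr)^k \|r_0\|$. Since $A$ is invertible, $\|x_k - x^*\|$ and $\|r_k\|$ are equivalent up to the constant factor $\|A^{-1}\|$ on one side and $\|A\|$ on the other, so taking the $k$-th root and the $\limsup$ in the definition of $\varrho(r_0)$ kills the constants and yields $\varrho(r_0) \leq m^*/\sqrt{1+(m^*)^2}$ for every starting residual. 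Taking the supremum over $r_0$ gives $\varrho^*_{GMRES(1),ss} \leq m^*/\sqrt{1+(m^*)^2}$.

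Next I would produce a specific initial guess that realizes this bound. Let $j^*$ be an index achieving $|m_{j^*}| = m^*$, and choose any nonzero $r_0 \in \mathrm{span}\{Q_{j^*}\}$ (equivalently, pick $x_0$ so that $A x_0 - b$ lies in this invariant block). By the invariance part of Theorem~\ref{thm:skew-general-Phi-bound}, $r_1 = \Phi(r_0) \in \mathrm{span}\{Q_{j^*}\}$ and $\|r_1\|/\|r_0\| = m^*/\sqrt{1+(m^*)^2}$. Iterating, $r_k \in \mathrm{span}\{Q_{j^*}\}$ for all $k$ and the ratio is exactly $m^*/\sqrt{1+(m^*)^2}$ at every step, so
\begin{equation*}
\|r_k\| = \left(\frac{m^*}{\sqrt{1+(m^*)^2}}\right)^{k} \|r_0\|.
\end{equation*}
Using the norm-equivalence between $\|x_k-x^*\|$ and $\|r_k\|$ as above, $\varrho(r_0) = m^*/\sqrt{1+(m^*)^2}$, which together with the upper bound gives equality in~\eqref{eq:r-factor-M-skew}.

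Finally, the strict inequality $m^*/\sqrt{1+(m^*)^2} < 1$ follows from $(m^*)^2 < 1 + (m^*)^2$; note that $m^*$ is finite because $M$ has only finitely many eigenvalues, and $m^* > 0$ is implicit from the assumption that $A = I-M$ is invertible so $M$ cannot have all-zero spectrum (otherwise the bound is trivially zero and the result still holds). There is no real obstacle here: the whole proof is a direct corollary of Theorem~\ref{thm:skew-general-Phi-bound}, with the only mild care needed in translating between residual convergence (used in the analysis) and error convergence (used in the definition of $\varrho$), which is handled by the invertibility of $A$.
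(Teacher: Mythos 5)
Your proof is correct and follows essentially the same route as the paper: the upper bound comes from iterating the contraction inequality \eqref{eq:q-max}, and sharpness comes from taking $r_0$ in the invariant block $\mathrm{span}\{Q_{j^*}\}$ with $|m_{j^*}|=m^*$, where Theorem \ref{thm:skew-general-Phi-bound} gives the exact per-step ratio. The only difference is your residual-versus-error norm-equivalence remark, which is harmless but unnecessary since the paper applies the root-convergence definition directly to the residual sequence $\varrho_k(r_0)=\|r_k\|^{1/k}$.
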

	\begin{proof}
		Recall that $r_k=\Phi(r_{k-1})$. From Theorem \ref{thm:skew-general-Phi-bound}, we have
		\begin{equation*}
			\|r_k\| \leq  \frac{m^*}{\sqrt{1+(m^*)^2}}\|r_{k-1}\|\leq \cdots \leq\left( \frac{m^*}{\sqrt{1+(m^*)^2}}\right)^k \|r_0\|.
		\end{equation*}
		From the first result in Theorem \ref{thm:skew-general-Phi-bound}, when $r_0 \in {\rm span}\{Q_j\}$, we  have $r_k=\Phi(r_{k-1}) \in {\rm span}\{Q_j\}$. Thus,
		\begin{equation*}
			\varrho^*_{GMRES(1),ss}=\max_{r_0} (\lim \sup_{k\rightarrow \infty} \Gamma_k(r_0))=\frac{m^*}{\sqrt{1+(m^*)^2}},
		\end{equation*}
		which completes the proof.
	\end{proof}
	The above result tells us that GMRES(1) always converges if $M$ is skew-symmetric. In \cite{de2024convergence}, the authors consider rAA(1), and show that for the case where $M$ is skew-symmetric, the worst-case root-convergence factor of rAA(1) is $\varrho^*_{rAA(1)}= \frac{m^*}{(1+(m^*)^2)^{1/4}}$, which requires $m^*<\sqrt{\frac{1}{2}(1+\sqrt{5})}\approx 1.272$ to guarantee that rAA(1) converges for arbitrary initial guess.  Notice that no matter what $m^*$ is, we always have $\varrho^*_{GMRES(1),ss}<\varrho^*_{rAA(1)}$.
	
	From the above discussion, we can conclude that the asymptotic convergence factor of GMRES(1) for $M$ being skew-symmetric depends solely on the dominant eigenvalue (largest magnitude) with which the initial residual $r_0$ is associated. 
	\begin{corollary}\label{coroll:eig-space}
		Suppose $M$ is skew-symmetric, and the initial residual $r_0=\sum_{j=1}^{j^*}u_j$, where $u_j\in {\rm span}\{Q_j\}$, $1\leq j^*\leq n/2$, and $|m_{j^*}|$ is the largest magnitude of the eigenvalues. Then, 
		\begin{equation*}
			\varrho(r_0)=\frac{|m^*_j|}{\sqrt{1+|m^*_j|^2}}.
		\end{equation*}
	\end{corollary}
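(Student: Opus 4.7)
\medskip
\noindent\textbf{Proof proposal.} The plan is to exploit that each ${\rm span}\{Q_j\}$ is $M$-invariant (hence also $A$-invariant) and to combine this with the fact that $\alpha_k$ is the residual-minimizing step. Since $A=I-M$, we can write $\Phi(v)=(1-\alpha(v))v+\alpha(v)Mv$, so the subspace $V=\bigoplus_{j=1}^{j^*}{\rm span}\{Q_j\}$ is $\Phi$-invariant, and by induction $r_k\in V$ for every $k$. Decomposing $r_k=\sum_{j=1}^{j^*}u_j^{(k)}$ with $u_j^{(k)}\in{\rm span}\{Q_j\}$ is then the natural coordinate system.

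Next I would establish a clean block decomposition of $\|r_{k+1}\|^2$. On ${\rm span}\{Q_j\}$ the real Schur block for the conjugate pair $\pm i|m_j|$ satisfies $M^TM=|m_j|^2 I$, so $\|Mu_j^{(k)}\|=|m_j|\|u_j^{(k)}\|$; skew-symmetry of $M$ gives $\langle u_j^{(k)},Mu_j^{(k)}\rangle=0$; and distinct columns of $Q$ are mutually orthogonal with each block $M$-invariant, so all cross-block inner products vanish. Expanding $r_{k+1}=(1-\alpha_k)r_k+\alpha_k Mr_k$ then yields
\begin{equation*}
\|r_{k+1}\|^2=\sum_{j=1}^{j^*}\rho_j(\alpha_k)\|u_j^{(k)}\|^2,\qquad \rho_j(\alpha):=(1-\alpha)^2+\alpha^2|m_j|^2.
\end{equation*}

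For the upper bound, I would use that $\alpha_k$ minimizes $\|r_k-\alpha Ar_k\|^2$, which is exactly the right-hand side above. Under the hypothesis that $|m_{j^*}|$ is largest in the support, $\rho_{j^*}(\alpha)-\rho_j(\alpha)=\alpha^2(|m_{j^*}|^2-|m_j|^2)\geq 0$, so
\begin{equation*}
\|r_{k+1}\|^2\leq\min_{\alpha}\rho_{j^*}(\alpha)\|r_k\|^2=\frac{|m_{j^*}|^2}{1+|m_{j^*}|^2}\|r_k\|^2,
\end{equation*}
which gives $\varrho(r_0)\leq |m_{j^*}|/\sqrt{1+|m_{j^*}|^2}$. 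For the matching lower bound I would track only the $j^*$ block: $\|u_{j^*}^{(k+1)}\|^2=\rho_{j^*}(\alpha_k)\|u_{j^*}^{(k)}\|^2\geq \frac{|m_{j^*}|^2}{1+|m_{j^*}|^2}\|u_{j^*}^{(k)}\|^2$, whence $\|r_k\|\geq \|u_{j^*}^{(k)}\|\geq (|m_{j^*}|/\sqrt{1+|m_{j^*}|^2})^k\|u_{j^*}^{(0)}\|$. Taking the $k$-th root and $\lim\sup$ (using $u_{j^*}^{(0)}\neq 0$, which is implicit in the hypothesis) closes the squeeze.

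The hard part will be the block-wise split of $\|r_{k+1}\|^2$: one must carefully verify that every cross-block inner product vanishes, which couples orthonormality of the Schur basis with $M$-invariance of each ${\rm span}\{Q_j\}$. Once that split is in hand, the remaining work is routine quadratic optimization, since $\rho_{j^*}$ dominates every $\rho_j$ appearing in the support and its unconstrained minimum equals the claimed rate squared.
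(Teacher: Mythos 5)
Your proposal is correct, and it actually supplies more than the paper does: the paper states this corollary without proof, leaving it as a consequence of Theorem \ref{thm:skew-general-Phi-bound} (invariance of each ${\rm span}\{Q_j\}$ under $\Phi$ and the exact per-block ratio $|m_j|/\sqrt{1+|m_j|^2}$) together with the step-bound reasoning used in Corollary \ref{cor:MRI-skew-worst-case}. Your route is the same in spirit -- Schur-block invariance, skew-symmetry giving $\langle u,Mu\rangle=0$ and $\|Mu\|=|m_j|\,\|u\|$ on each block, and optimality of $\alpha_k$ -- but you make two things explicit that the paper glosses over: (i) the clean split $\|(I-\alpha A)r_k\|^2=\sum_j\rho_j(\alpha)\|u_j^{(k)}\|^2$ with $\rho_j(\alpha)=(1-\alpha)^2+\alpha^2|m_j|^2$, which immediately yields the upper bound $\min_\alpha\rho_{j^*}(\alpha)=|m_{j^*}|^2/(1+|m_{j^*}|^2)$ restricted to the support of $r_0$; and (ii) the matching lower bound obtained by tracking only the $j^*$ block, using $\|u_{j^*}^{(k+1)}\|^2=\rho_{j^*}(\alpha_k)\|u_{j^*}^{(k)}\|^2\geq\min_\alpha\rho_{j^*}(\alpha)\,\|u_{j^*}^{(k)}\|^2$ for whatever $\alpha_k$ the method picks. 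That lower-bound step is exactly the ingredient needed to turn the paper's one-step bound into an equality for the root-convergence factor of a mixed initial residual, and the paper never writes it down, so your argument is a genuine completion rather than a mere paraphrase. The only caveats, which you already flag or which are harmless, are that the hypothesis must be read as $u_{j^*}\neq 0$ (otherwise the dominant block is absent and the rate is governed by the largest $|m_j|$ actually present), and that positivity of $\rho_{j^*}(\alpha)$ for $|m_{j^*}|>0$ guarantees the residual never vanishes, so the $k$-th roots in the definition of $\varrho(r_0)$ are well defined.
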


	\begin{remark}
		In \cite{saad2003iterative}, it has shown that when $A$ is a real positive definite matrix, then
		\begin{equation*}
			\|r_k\| \leq \sqrt{1-\lambda_{\min} \left(\frac{A^{-1}+A^{-T}}{2}\right) \lambda_{\min} \left(\frac{A+A^{T}}{2}\right) }\|r_{k-1}\|.
		\end{equation*}
		In fact, when $M$ is skew-symmetric, $\frac{A+A^T}{2}=\frac{I-M+ I-M^T}{2}=I$, and $\frac{A^{-1}+A^{-T}}{2}=(I+MM^T)^{-1}$.
		Then, $\lambda_{\min} \left(\frac{A^{-1}+A^{-T}}{2}\right)=(1+(m^*)^2) )^{-1}$. Thus,
		\begin{equation*}
			\sqrt{1-\lambda_{\min} \left(\frac{A^{-1}+A^{-T}}{2}\right) \lambda_{\min} \left(\frac{A+A^{T}}{2}\right)} =\sqrt{1-\frac{1}{1+(m^*)^2}},
		\end{equation*}
		which is the same result as we obtained in Corollary \ref{cor:MRI-skew-worst-case}.
	\end{remark}
	\subsection{q-linear convergence}
	Based on the previous discussion, we are able to determine the q-linear convergence of GMRES(1). The following results have been obtained.
	\begin{theorem}
		For GMRES(1) applied to $Ax=b$, where $A$ is symmetric, the residuals converge  q-linearly with q-factor $\sigma$:
		\begin{equation}
			\|r_k\|\leq \sigma \|r_{k-1}\|,
		\end{equation}
		where $\sigma=\rho^*_{GMRES(1)}$ is given by \eqref{eq:lower-bound} if $A$ is symmetric and $\sigma=\rho^*_{GMRES(1),ss}$ is given by \eqref{eq:r-factor-M-skew} if $A$ is skew-symmetric.
	\end{theorem}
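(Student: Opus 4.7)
The plan is to observe that q-linear convergence with factor $\sigma$ is precisely the statement that $\|r_k\|\le \sigma\|r_{k-1}\|$ holds uniformly in $k$, and that both one-step contractions have already been proved as intermediate steps when deriving the worst-case root-convergence factors earlier in the paper. The proof therefore reduces to isolating and reusing those per-iteration estimates, rather than running a new induction.

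First I would write the GMRES(1) residual recurrence in the form $r_k=\Phi(r_{k-1})=\mathcal{I}(r_{k-1})\,r_{k-1}$, which is valid at every step by \eqref{eq:residual-form}. In the symmetric case, Lemma \ref{lem:RQ-max} gives $\|\mathcal{I}(v)v\|\le \Gamma^*\|v\|$ for every $v\ne 0$; applied with $v=r_{k-1}$, this yields $\|r_k\|\le \Gamma^*\|r_{k-1}\|$ for each $k\ge 1$. Using \eqref{eq:def-Gamma} together with \eqref{eq:lower-bound} identifies $\Gamma^*=\varrho^*_{GMRES(1)}$ in the definite subcase, which is exactly the desired q-factor. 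In the case where $M=I-A$ is skew-symmetric (this is what the statement should read, rather than ``$A$ skew-symmetric''), the bound \eqref{eq:q-max} in Theorem \ref{thm:skew-general-Phi-bound} gives $\|r_k\|=\|\Phi(r_{k-1})\|\le \frac{m^*}{\sqrt{1+(m^*)^2}}\|r_{k-1}\|$, and Corollary \ref{cor:MRI-skew-worst-case} identifies the right-hand factor with $\varrho^*_{GMRES(1),ss}$.

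Iterating either inequality yields geometric decay at rate $\sigma$, so the q-linear factor coincides with the worst-case root-convergence factor in each setting. I do not anticipate any real obstacle here: every ingredient is already available, so the argument is essentially bookkeeping. The only subtle point worth flagging is that the definition \eqref{eq:q-convergence} of q-linear convergence requires $\sigma\in(0,1)$, so strictly one should restrict the symmetric case of the theorem to the definite subcase where $\varrho^*_{GMRES(1)}<1$; the indefinite bound $\sigma=1$ does not qualify as q-linear under that definition and would need to be stated separately as a non-expansion estimate.
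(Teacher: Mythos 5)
Your argument is exactly the paper's proof: the per-step bounds from Lemma \ref{lem:RQ-max} in the symmetric case and \eqref{eq:q-max} in the skew-symmetric case are applied with $v=r_{k-1}$, and the resulting factors are identified with $\varrho^*_{GMRES(1)}$ and $\varrho^*_{GMRES(1),ss}$. Your side remarks (that the hypothesis should read $M=I-A$ skew-symmetric, and that the indefinite case with $\sigma=1$ falls outside the $\sigma\in(0,1)$ definition of q-linear convergence) are valid refinements of the statement, not gaps in the proof.
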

	\begin{proof}
		These results can be derived directly from Lemma \ref{lem:RQ-max} and \eqref{eq:q-max}.
	\end{proof}
	We point out that the q-linear convergence with q-factor $\sigma$ actually is the worst-case rate. For different initial guesses/errors,  $\sigma$ might be different, but it must be less than  $\rho^*_{GMRES(1)}$ ($\rho^*_{GMRES(1),ss}$). However, we do not further discuss this. In \cite{vecharynski2010cycle}, it has been shown the sequence $\left\{\frac{\|r_k\|}{\|r_{k-1}\|}\right\}$ is increasing. We know that $\frac{\|r_k\|}{\|r_{k-1}\|}$ has an upper bound $\sigma$, so the sequence must be convergent.
	
	\section{Numerical experiments}\label{sec:num}
	In this section, we present some numerical experiments to validate our  theoretical results of worst-case root-convergence factor of GMRES(1).  For symmetric $A$, we consider three choices, which correspond to three cases of  $M$: $M$ is not invertible, $\rho(M)<1$, and $\rho(M)>1$.
	\begin{example}[$A$ is symmetric]
		We consider solving $Ax=b=0$, where $A$ is  $A_i$.
		\begin{equation*}
			A_1=	\begin{pmatrix}
				1 & 0 & 0 \\
				0  &2 & 0\\
				0 & 0 & 3
			\end{pmatrix},\,
			A_2=	\begin{pmatrix}
				1/2 & 0 & 0 & 0 &0 \\
				0  &1/4 & 0 & 0 & 0\\
				0 & 0 & 1/8 & 0 & 0\\
				0 & 0 &0  &1/16 &0\\
				0 & 0 &0  &0 &1/32
			\end{pmatrix},
			\,
			A_3=	\begin{pmatrix}
				-1 & 0 & 0  &0\\
				0  &2 & 0 & 0\\
				0 & 0 & 3 & 0\\
				0 & 0 & 0 & 4\\
			\end{pmatrix}.
		\end{equation*}
	\end{example}
	Using \eqref{eq:gam1-form}, we obtain the worst-case root-convergence factor for $A_1, A_2,A_3$: $\varrho^*_1=\frac{1}{2}=0.5,  \varrho^*_2=\frac{15}{17}\approx 0.8824, \varrho^*_3=1$ for $A_1, A_2$ and $A_3$, respectively.
	For our tests, we run 1000 random initial guesses, whose elements are uniform distribution in the interval $(-1,1)$, and we require $\|r_k\|\leq 1e-30$ to make $k$ large enough to capture the behavior of $\varrho_k(r_0)=\|r_k\|^{1/k}$.  Figure \ref{fig:A1A2} reports the root-convergence factor as a function of the iteration index $k$ for different initial guesses for $A_1$ and $A_2$. The dashed line is our theoretical worst-case root-convergence factor, denoted as $\varrho^*$. We see that the numerical root-convergence factor is bounded by the worst-case root-convergence factor, and the root-convergence factor depends on initial guesses. 
		\begin{figure}[H]
		\centering
		\includegraphics[width=5cm]{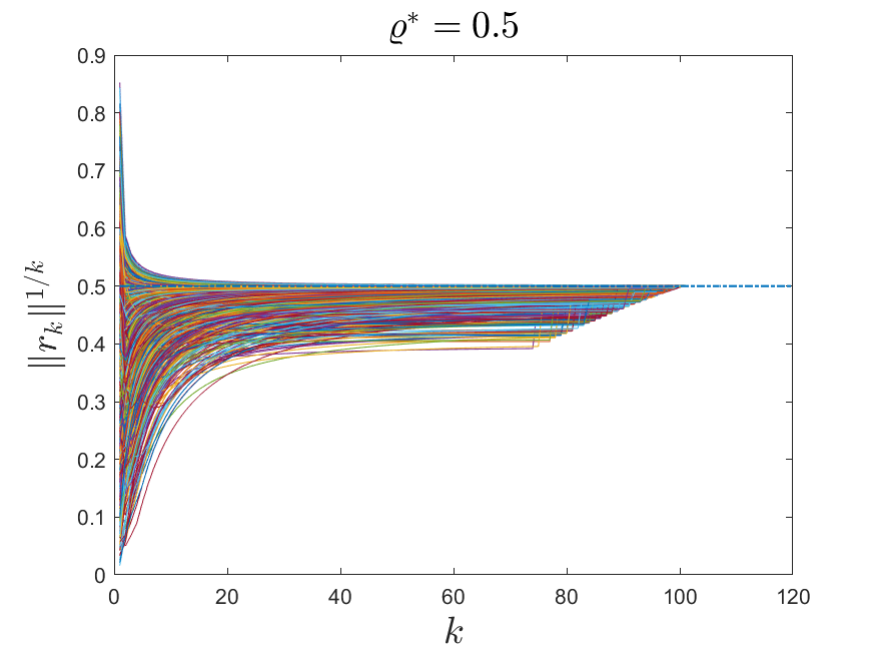}
		\includegraphics[width=5cm]{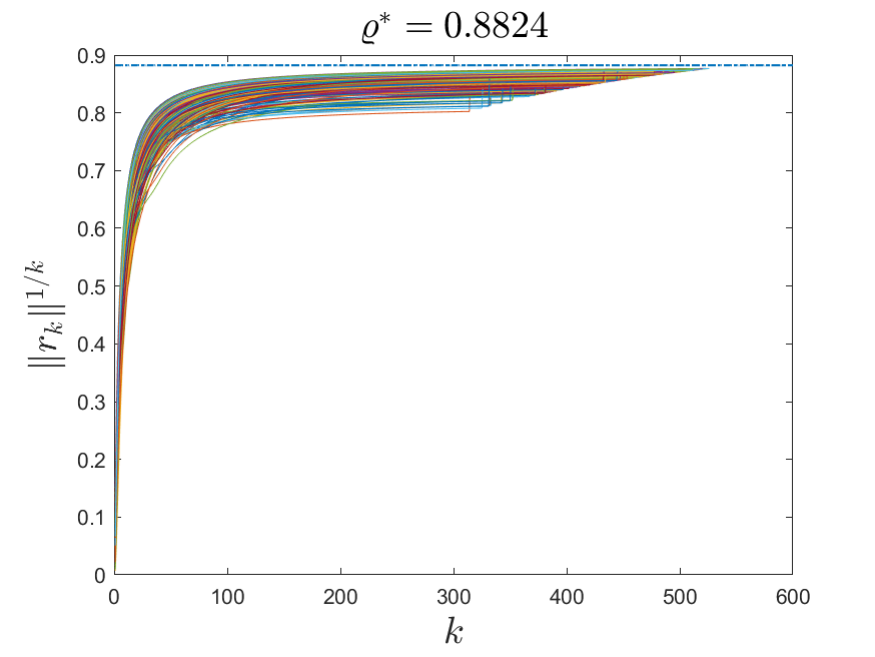}
		\caption{Root-convergence factor $\varrho_k(r_0)=\|r_k\|^{1/k}$ (solid lines) as a function of the iteration index $k$ with 1000 random initial guesses for $A_1$ (left) and $A_2$ (right). The dashed line is the theoretical worst-case root convergence factor, denoted as $\varrho^*$.}\label{fig:A1A2}
	\end{figure}
	
	For $A_3$, the indefinite case, the root-convergence factor is displayed on the left side of Figure \ref{fig:A3two}. To verify our result in Lemma \ref{lem:eigpair-two-condition}, which states the convergence factor depends on the initial guess,  we also consider $A_3$ and choose initial guesses where the first component is zero. Then, the worst-case root-convergence factor depends only on the eigenvalues 2, 3, 4. Using Theorem \ref{thm:GMRES1-worst-case}, we have $\varrho^*=\frac{4-2}{4+2}=\frac{1}{3}\approx0.3333$. The result on the right side of Figure \ref{fig:A3two} validates this.
		\begin{figure}[H]
		\centering
		\includegraphics[width=5cm]{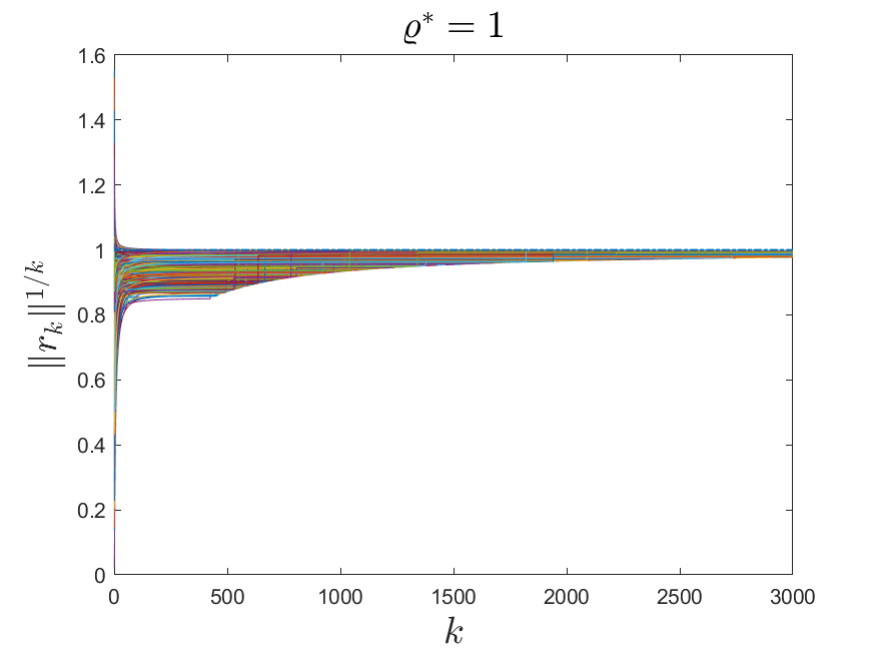}
		\includegraphics[width=5cm]{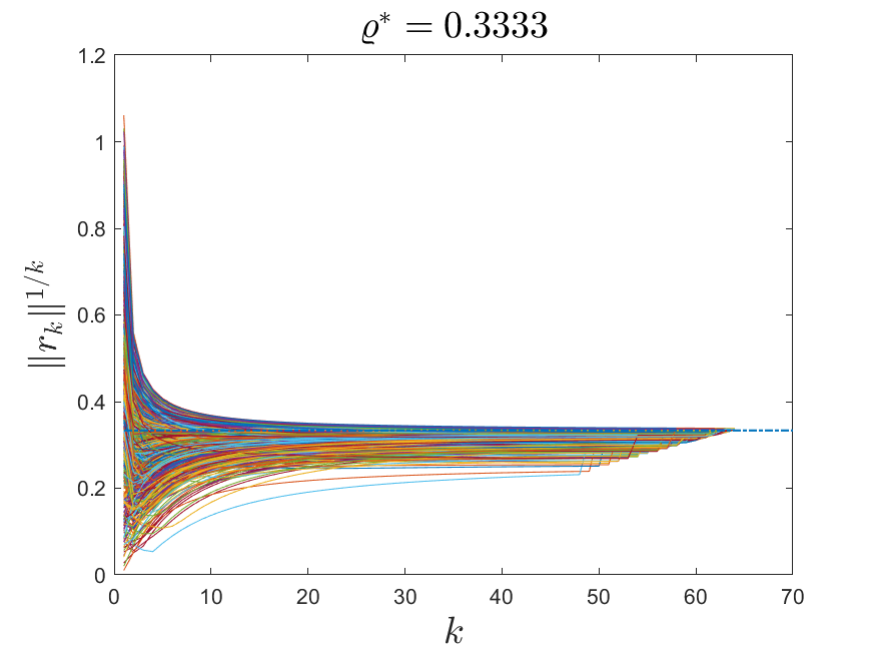}
		\caption{Root-convergence factor $\varrho_k(r_0)=\|r_k\|^{1/k}$ (solid lines) as a function of the iteration index $k$ with 1000 random initial guesses for  $A_3$ (left)  and  for $A_3$ (right) with the first elements of the initial guesses being zero. The dashed line is the theoretical worst-case root-convergence factor, denoted as $\varrho^*$.}\label{fig:A3two}
	\end{figure}
	
	To investigate how the initial guess affects the root-convergence factor, see Theorem \ref{thm:two-eigenvectors}, we consider two different initial guesses for $A_2$ (denoted its eigenvalues as $a_1=1/2, a_2=1/4, a_3=1/16$).  One initial guess is  $x_0=[1, 2\sqrt{2}, 0,0,0]^T$ corresponding to $r_0=[1/2, \sqrt{2}/2, 0,0,0]^T$. Then,  $\epsilon=\frac{c_2}{c_1}=\sqrt{2}=\sqrt{\frac{a_1}{a_2}}$. We have $\varrho(r_0)=\frac{a_1-a_2}{a_1+a_2}=\frac{1}{3}$. The another is $x_0=[1, 0, 8,0,0]^T$ corresponding to $r_0=[1/2, 0, 1,0,0]^T$. Then, $\epsilon=\frac{c_3}{c_1}=2=\sqrt{\frac{a_1}{a_3}}$ and $\varrho^*=\frac{a_1-a_3}{a_1+a_3}=0.6000$. The root-convergence factors are reported in Figure \ref{fig:A2initial}. The dashed line is the theoretical worst-case root-convergence factor. We observe that the numerical root-convergence factors align with our theoretical results.

	\begin{figure}
		\centering\includegraphics[width=5cm]{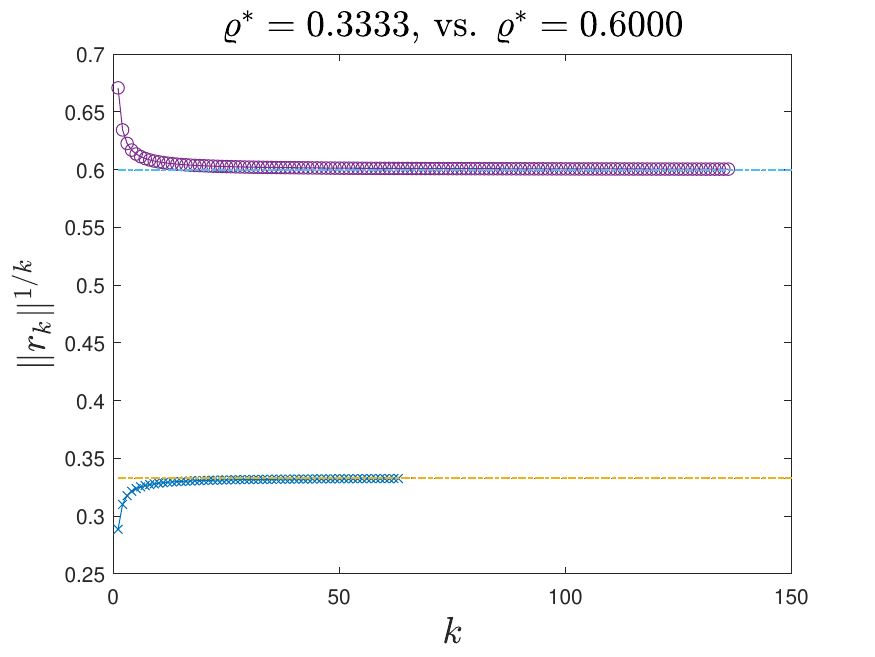}
		\caption{Root-convergence factor $\varrho_k(r_0)=\|r_k\|^{1/k}$ (solid lines) as a function of the iteration index $k$ with two different initial guesses for $A_2$. The dashed line is the theoretical worst-case root-convergence factor. Cross markers  corresponds to $x_0=[1, 2\sqrt{2}, 0,0,0]^T$, where $\varrho^*=0.3333$ . Circle markers are for $x_0=[1, 0, 8,0,0]^T$, where $\varrho^*=0.6000$.}\label{fig:A2initial}
	\end{figure}
	In the following example, we consider the case that $M$ is symmetric.
	\begin{example}[$M$ is skew-symmetric]
		Consider $A_4=I-M$, where 
		\begin{equation*}
			M= \frac{1}{8}	\begin{pmatrix}
				0 & 1 & 0 & -5 & 0&  0 & 0 & 2\\ 
				-1 & 0 & 0 & 0 &  5&  0 &  -2& 0\\ 
				0 & 0 & 0 & 0 & -2 & -1 & 5 & 0 \\
				5 & 0 & 0 & 0 & -1 & -2 & 0 & 0\\ 
				0 &-5 & 2 & 1 & 0 & 0 & 0 & 0\\ 
				0 &0 & 1 & 2 & 0 & 0 & 0 & -5\\ 
				0 &2 & -5 & 0 & 0 & 0 & 0 & 1\\ 
				-2& 0 & 0 & 0 & 0& 5 & -1 & 0
			\end{pmatrix},
		\end{equation*}
		which is taken from \cite{ward1976eigensystem} with some modification. The eigenvalues of $M$ are $\pm\iota\frac{1}{4}, \iota \pm\frac{2}{4},  \pm\iota \frac{3}{4}, \pm\iota 1$. Thus, $m^*=1$ and $\varrho^*=\frac{1}{\sqrt{1+1^2}}\approx 0.7071$.
	\end{example}
	On the left side of Figure \ref{fig:A4}, we run 1000 random initial guesses, whose elements are uniform distribution in the interval $(-1,1)$, and report the root-convergence factor $\varrho_k(r_0)=\|r_k\|^{1/k}$ as a function of the iteration index $k$. The dashed line is our theoretical worst-case root-convergence factor, denoted as $\varrho^*$. We see that when $k \rightarrow \infty$, the root-convergence factor converges to the worst-case root-convergence factor $\varrho^*=0.7071$. To validate Corollary \ref{coroll:eig-space}, we consider $x_0=Q(:,3)+Q(:,5)+Q(:,7)$, see \eqref{eq:schur-complement}, which is a vector in the space spanned by the columns of $Q_2, Q_3, Q_4$ corresponding to eigenvalues $\pm\iota\frac{1}{4}, \iota \pm\frac{2}{4},  \pm\iota \frac{3}{4}$ of $M$. Thus, the expected root-convergence factor is $\frac{3/4}{\sqrt{1+(3/4)^2}}=\frac{3}{5}=0.6$. Our numerical results on the right side of Figure \ref{fig:A4} confirm this.
	
	\begin{figure}
		\centering
		\includegraphics[width=5cm]{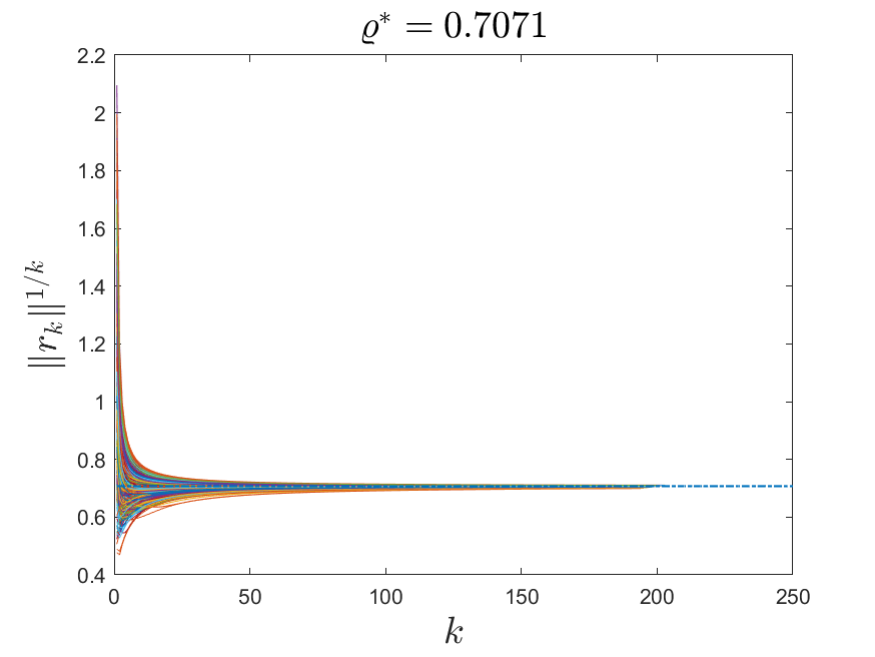}
		\includegraphics[width=5cm]{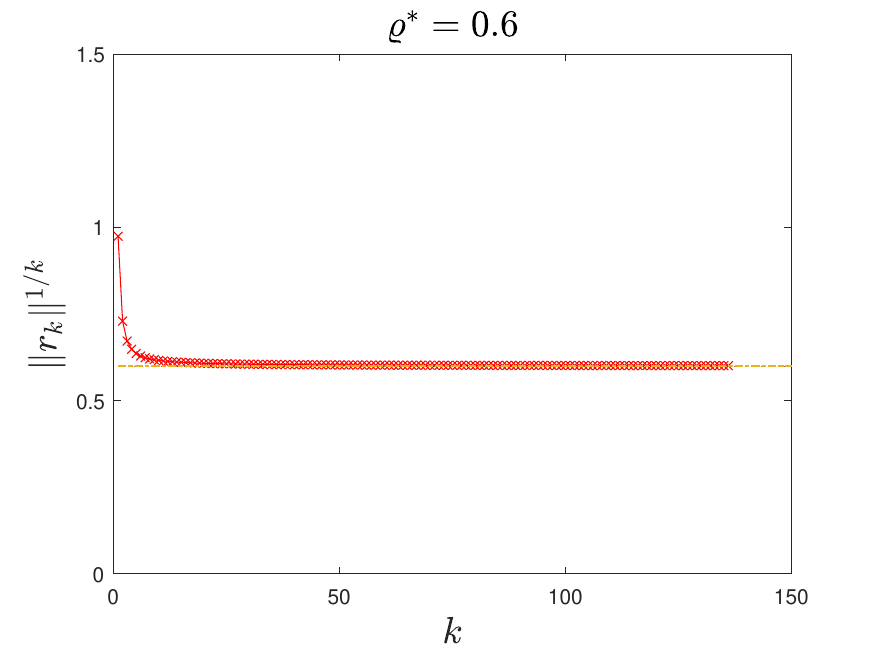}
		\caption{Root-convergence factor $\varrho_k(r_0)=\|r_k\|^{1/k}$ (solid lines) as a function of the iteration index $k$ with 1000 random initial $x_0$ for $A_4$ (left). The dashed line is the theoretical worst-case root convergence factor, denoted as $\varrho^*$. Cross markers are  $\varrho_k(r_0)=\|r_k\|^{1/k}$ as a function of the iteration index $k$ with $x_0=Q(:,3)+Q(:,5)+Q(:,7)$ for $A_4$ (right). }\label{fig:A4}
	\end{figure}

	\section{Conclusion}\label{sec:con}
	We study the root-convergence factor of GMRES(1) in solving linear systems. We derive theoretical results on the worst-case root-convergence factor based on eigenvector-dependent nonlinear eigenvalue problems. When the coefficient matrix $A$ of the underlying linear system is symmetric, the root-convergence factor highly depends on the initial guess. For some special initial guesses, we are able to derive their root-convergence factors. When the matrix $M=I-A$ is skew-symmetric, our theoretical results reveal that the worst-case root-convergence factor depends solely on the spectral radius of $M$, and GMRES(1) converges unconditionally. The q-linear convergence factor is derived too, which is the same as the worst-case root-convergence factor.  Additionally, we present new results for restarted Anderson acceleration with window size one.

\section*{Acknowledgments}
The author thanks Microsoft Copilot for spelling and grammar check.

\bibliographystyle{siam}
\bibliography{gmresBib}

\end{document}